
\documentclass[10 pt, leqno]{amsart}

\usepackage{amsfonts}
\usepackage{amsthm}
\usepackage{amssymb}
\usepackage{latexsym}
\usepackage{amsmath}
\usepackage{mathrsfs}
\usepackage{comment}

\pagestyle{plain}

\theoremstyle{plain}
\newtheorem{tw}{Theorem}[section]

\newtheorem {lem} [tw]{Lemma}

\newtheorem{thm}[tw]{Theorem}
\newtheorem{lmma}[tw]{Lemma}
\newtheorem{ppsn}[tw]{Proposition}
\newtheorem{crlre}[tw]{Corollary}
\newtheorem{xmpl}[tw]{Example}

\theoremstyle{definition}
\newtheorem {deft}[tw] {Definition}
\newtheorem {rem} [tw]{Remark}

\newcommand{\nnred}{\wt{W}_n}
\newcommand{\knred}{\wt{W}_k}
\newcommand{\lnred}{\wt{W}_l}

\newcommand{\bc} {\Bbb C}
\newcommand{\bn}{\Bbb N}
\newcommand{\br}{\Bbb R}
\newcommand{\bt}{\Bbb T}
\newcommand{\bz}{\Bbb Z}
\newcommand{\bF}{\Bbb F}

\newcommand{\Dirac}{\textup{\texttt{\large D}}}

\newcommand{\alg} {\mathsf{A}}
\newcommand{\Alg} {\mathcal{A}}
\newcommand {\Tr} {{\textup{Tr}}}
\newcommand {\Lin} {{\textup{Lin}}}
\newcommand {\LIM} {{\textup{LIM}}}
\newcommand {\card} {{\textup{card}}}
\newcommand {\QISO} {{\textup{QISO}}}
\newcommand {\qlg} {{\mathsf{Q}}}
\newcommand {\freeal} {C_r^*(\bF_2)}
\newcommand{\freeg}{\bF_2}
\newcommand {\id} {{\textup{id}}}
\newcommand{\blg} {\mathsf{B}}

\newcommand{\tu}{\textup}

\newcommand{\Com}{\Delta}

\newcommand{\Hil}{\mathsf{H}}
\newcommand{\Dom}{\textup{Dom}}

\newcommand{\Lap}{\mathcal{L}}

\newenvironment{rlist}
{

\begin{enumerate}}
{\end{enumerate}}

\newcommand{\la}{\lambda}

\newcommand{\ot}{\otimes}

\newcommand{\wt}{\widetilde}

\newcommand{\be}{\begin{equation}}
\newcommand{\ee}{\end{equation}}
\newcommand{\bea}{\begin{eqnarray}}
\newcommand{\eea}{\end{eqnarray}}
\newcommand{\bean}{\begin{eqnarray*}}
\newcommand{\eean}{\end{eqnarray*}}
\newcommand{\brray}{\begin{array}}
\newcommand{\erray}{\end{array}}
\newcommand{\ben}{\begin{equation}{nonumber}}
\newcommand{\een}{\end{equation}{nonumber}}

\newcommand{\bdfn}{\begin{dfn}}
\newcommand{\bthm}{\begin{thm}}
\newcommand{\blmma}{\begin{lmma}}
\newcommand{\bppsn}{\begin{ppsn}}
\newcommand{\bcrlre}{\begin{crlre}}
\newcommand{\bxmpl}{\begin{xmpl}}
\newcommand{\brmrk}{\begin{rmrk}}
\newcommand{\edfn}{\end{dfn}}
\newcommand{\ethm}{\end{thm}}
\newcommand{\elmma}{\end{lmma}}
\newcommand{\eppsn}{\end{ppsn}}
\newcommand{\ecrlre}{\end{crlre}}
\newcommand{\exmpl}{\end{xmpl}}
\newcommand{\ermrk}{\end{rmrk}}

\numberwithin{equation}{section}

\keywords{Group $C^*$-algebra, spectral triple, quantum isometry group} \subjclass[2000]{Primary 58B32, Secondary 16W30, 46L89}

\begin{document}

\author{Jyotishman Bhowmick}
\footnote{\emph{Permanent address of the second named author:} Faculty of Mathematics and Computer Science, University of
\L\'{o}d\'{z}, ul. Banacha 22, 90-238 \L\'{o}d\'{z}, Poland.}
\address{Stat-Math Unit, Indian Statistical Institute, 203, B. T. Road, Kolkata 700 208} \email{jyotishmanb@gmail.com}
\author{Adam Skalski}
\address{Department of Mathematics and Statistics,  Lancaster University,
Lancaster, LA1 4YF  }
  \email{a.skalski@lancaster.ac.uk}

\title{\bf Quantum isometry groups of noncommutative manifolds associated to group $C^*$-algebras}

\begin{abstract}
Let $\Gamma$ be a finitely generated discrete group. The standard spectral triple on the group $C^*$-algebra $C^*(\Gamma)$ is
shown to admit the quantum group of orientation preserving isometries. This leads to new examples of compact quantum groups. In
particular the quantum isometry group of the $C^*$-algebra of the free group on $n$-generators is computed and turns out to be a
quantum group extension of the quantum permutation group $A_{2n}$ of Wang. The quantum groups of orientation and real structure
preserving isometries are also considered and construction of the Laplacian for the standard spectral triple on $C^*(\Gamma)$
discussed.
\end{abstract}

\maketitle

From the early days of the theory of spectral triples a prominent role was played by an example of a spectral triple on a group
$C^*$-algebra $C^*(\Gamma)$, where $\Gamma$ is a (discrete) group equipped with a fixed length function $l:\Gamma \to \br_+$ (it
appears already in Connes' original paper \cite{co}).  When $\Gamma$ is finitely generated, properties of the natural word-length
function on $\Gamma$ and its associated spectral triple reflect deep combinatorial geometric aspects of $\Gamma$ -- so for
example summability of the triple corresponds to the growth conditions, Rieffel regularity of the spectral triple is closely
related to the Haagerup's Rapid Decay property (\cite{OR}).

Nowadays spectral triples are viewed as objects best suited to describe noncommutative (compact, Riemannian) manifolds. Thus,
especially in the mathematical physics literature, the focus seems to be on investigating spectral triples on deformations of
function algebras of classical spaces.   In recent work of Goswami and the first named author (\cite{Deb}, \cite{JyotDeb2},
\cite{Deb_real}), the notion of a quantum isometry group of a spectral triple, understood as the universal compact quantum group
acting on the corresponding noncommutative manifold in the way compatible with the spectral triple structure was introduced. It
was shown that if the spectral triple is sufficiently well-behaved, the quantum isometry group exists, and in the classical
situation the definition of `compatible' actions given in \cite{Deb} describes precisely the relevant group actions preserving
the Riemannian metric. Moreover Goswami and the first named author investigated quantum isometry groups of classical manifolds
and their noncommutative deformations. In \cite{ours} together with Goswami we studied quantum isometry groups of spectral
triples on $AF$ $C^*$-algebras introduced in \cite{Chrivan}. Already there the nature of the problem becomes more combinatorial
and many connections with quantum permutation groups (\cite{Wang}, \cite{Teosurvey}), or more general  quantum symmetry group of
finite graphs (\cite{graph}) appear.

The aim of this paper is the initiation of the study of quantum isometry groups of spectral triples on group $C^*$-algebras. We
begin by introducing basic notations and recalling the construction of spectral triples on group $C^*$-algebras associated to a
length function on the group. In Section 2 we describe the approach to the quantum group of orientation preserving isometries due
to Goswami and the first named author and show that the triples described in Section 1 fit in the framework studied in
\cite{JyotDeb2}.  The first concrete examples appear in Section 3 and come from singly generated abelian groups. In the case
$\Gamma= \bz$, $C^*(\Gamma) = C(\bt)$ and the spectral triple associated to the usual symmetric generator set $\{-1,1\}$ is the
classical triple associated to the differentiation, so that the quantum isometry groups coincides with the classical one, as was
shown in \cite{JyotDeb1}. Direct computations for $\Gamma=\bz/_{n\bz}$ $(n \geq 5)$ show that here the resulting quantum isometry
groups are also commutative, but, curiously, the one associated with $\bz/_{4\bz}$ is not (a similar phenomenon was earlier
observed for quantum symmetry groups of $n$-gons in \cite{BanJFA}).

The next two sections, 4 and 5, are devoted to the thorough analysis of two more complicated examples -- the smallest
noncommutative group $S_3$, both in its permutation and dihedral incarnations (interpreted as leading to two different generating
sets), and the queen of all noncommutative discrete groups, free group on $n$-generators ($n \geq 2$). The free group case leads
to new compact quantum groups which are generated by partial isometries and can be viewed as extensions of quantum permutation
groups of Wang mentioned above. Section 6 is devoted to the analysis of properties of a natural real structure of the spectral
triples in question and corresponding quantum groups of isometries preserving the real structure.  In the last section we discuss
the construction of corresponding Laplacians and the relation between the associated  quantum isometry groups in the sense of
\cite{Deb} and these studied in Sections 3-5.

\section{Basic notations and the standard construction and properties  of spectral triples on group $C^*$-algebras} \label{introLap}

The spatial (minimal) tensor product of $C^*$-algebras will be denoted by $\ot$ and the algebraic tensor product of two algebras will be denoted by $ \ot_{{\rm alg}} .$ If $\alg$ is a $C^*$-algebra, $S(\alg)$ will
denote the state space of $\alg$. The cyclic groups $\bz/_{n \bz}$ will be denoted simply by $\bz_n$ ($n \in \bn)$.

For the definition of a spectral triple we refer to \cite{co} or \cite{Conti}. Throughout this article, we will work with
`compact' noncommutative manifolds, i.e.\ assume that Dirac operators have compact resolvents. As we are only interested here in
odd spectral triples, the grading will not play any role.

Let $\Gamma$ be a discrete group equipped with a length function $l:\Gamma \to \br_+$. Recall that $l$ has to satisfy the
following conditions:
\[ l(g) = 0 \textrm{ iff } g=e, \;\; l(g^{-1}) = l(g), \;\; l(gh) \leq l(g) + l(h), \;\; g,h \in \Gamma.\]
We will  assume that the length takes only integer values and that for each $n \in \bn$ the set $W_n:=\{g \in \Gamma:
l(g) = n\}$ is finite.

The most important example is given by the word length induced by a fixed finite symmetric set of generators in  (finitely
generated) $\Gamma$. For most purposes we will assume that the chosen symmetric set of generators is \emph{minimal}.  Define the
operator $\Dirac_{\Gamma}$ on $l^2(\Gamma)$ by
\[ \Dom (\Dirac_{\Gamma}) = \{\xi \in l^2(\Gamma): \sum_{g \in \Gamma} l(g)^2 |\xi(g)|^2 < \infty\},\]
\[ (\Dirac_{\Gamma} (\xi)) (g) = l(g) \xi(g), \;\;\; \xi \in \Dom(\Dirac_{\Gamma}), g \in \Gamma.\]

If $g $ belongs to $ \Gamma, $ we will denote by $ \delta_{g} $ the function in $ l_2 ( \Gamma ) $ which takes the value $ 1 $ at
the point $ g $ and $ 0 $ at all other points. The natural generators of the algebra $ \bc[\Gamma] $ (and their images in the
left regular representation) will be denoted by $\lambda_g$.

Consider the left regular representation of $\Gamma$ on $l_2(\Gamma)$ and extend it to a representation of either $C^*(\Gamma)$
or $C^*_r(\Gamma)$. It is easy to check that $(\bc[\Gamma], l^2(\Gamma), \Dirac_{\Gamma})$ is a spectral triple (note that we
will often view $\bc[\Gamma]$ as a subalgebra of $B(l^2(\Gamma))$). It can be viewed as a spectral triple on the full group
$C^*$-algebra $C^*(\Gamma)$, but as all our considerations will concern the left regular representation, we will usually view it
as a triple on $C^*_r(\Gamma)$. For more information on such spectral triples, extension to weighted Dirac operators and related
topics we refer to a recent paper \cite{Conti}. Regularity properties of triples of that type were considered for example in
\cite{Rief} and \cite{OR}. If $l$ is the usual word-length function associated to a finite symmetric set of generators $S$, it is
easy to see that $\card (W_n)\leq \card(S)^n$, so that the triple $(\bc[\Gamma], l^2(\Gamma), \Dirac_{\Gamma})$ is automatically
\emph{$\theta$-summable} - for each $t>0$ the (bounded) operator $\exp(-t\Dirac^2)$ is trace class. This is confirmed by a simple
calculation:
\[ \Tr(\exp(-t\Dirac^2)) = \sum_{\gamma\in \Gamma} \textup{e}^{-tl(\gamma)^2} = \sum_{n=0}^{\infty}\card(W_n) e^{-tn^2} <
\infty.\]

Let $\tau$ denote the canonical tracial state on $C^*_r(\Gamma)$, that is the vector state associated to the (cyclic and
separating) vector $\delta_e \in l^2(\Gamma)$. It is easy to see that the eigenvectors of $\Dirac_{\Gamma}$ belong to the (dense)
subspace $\bc[\Gamma] \delta_e$. Moreover we also have $\bc[\Gamma] = \Lin \{a \in \bc[\Gamma]: a \delta_e  \textrm{ is an
eigenvector of } \Dirac_{\Gamma}\}$. These observations will be used in the following section.

\section{Quantum group of orientation preserving isometries associated to spectral triples on $C^*(\Gamma)$ -- general theory} \label{genth}

The following notion of quantum families of orientation preserving isometries was introduced in \cite{JyotDeb2}:

\begin{deft} \label{family}
A quantum family of orientation preserving isometries for the spectral triple $(\Alg, \Hil, \Dirac)$ is given by a pair $(
{\mathcal S} ,U)$, where $  {\mathcal S} $ is a separable unital $C^*$-algebra and $U$ is a linear map from $ \Hil \rightarrow
\Hil \ot   {\mathcal S}  $ such that $ \wt{U} $ given by  $ \wt{U} ( \xi \ot b ) = U ( \xi ) ( 1 \ot b ), ~ ( \xi \in \Hil, ~ b
\in   {\mathcal S}  ) $ extends to a unitary element of the multiplier algebra $M(K(\Hil) \ot   {\mathcal S} )$ satisfying the
following conditions:
\begin{rlist}
\item for every state $\phi \in   {\mathcal S} ^*$ the operator $U_{\phi} = (\id_{K(\Hil)} \ot \phi) (\wt{U}) \in B(\Hil)$ commutes with $\Dirac$;
\item if we define $\alpha_U ( x ) = \wt{U}(x \ot 1) {\wt{U}}^* $ for $x \in B(\Hil)$, then for every state $\phi \in   {\mathcal S} ^*$
and each $a \in \Alg$ the operator $  (\id_{K(\Hil)} \ot \phi) \alpha_U ( a ) $ belongs to $\Alg''$.
\end{rlist}

In case the $ C^* $-algebra ${\mathcal S}$ has a coproduct $\Delta $ such that $ (   {\mathcal S} , \Delta ) $ is a compact
quantum group and $ U $ is a unitary corepresentation of $ (   {\mathcal S} , \Delta ) $ on $ \Hil, $ we say that $ (   {\mathcal
S} , \Delta ) $ acts by orientation preserving isometries on the spectral triple $(\Alg, \Hil, \Dirac)$.
\end{deft}


Consider the category $\mathbf{ Q} (\Alg, \Hil, \Dirac)$ with the object-class consisting of all quantum families of orientation
preserving isometries $ (   {\mathcal S} , U ) $ of the given spectral triple, and the set of morphisms $ {\rm Mor} ( ( {\mathcal
S} , U ), (   {\mathcal S} ^{\prime}, U^{\prime} ) ) $ being the set of unital $ C^* $-homomorphisms $ \Phi: {\mathcal S}
\rightarrow   {\mathcal S} ^{\prime} $ satisfying $ ( \id \ot \Phi ) ( U ) = U^{\prime}.$ We also consider another category
$\mathbf{ Q^{\prime}} (\Alg, \Hil, \Dirac)$ whose objects are triplets $ (   {\mathcal S} , \Delta, U ), $ where $ ( {\mathcal S}
, \Delta ) $ is a compact quantum group acting by orientation preserving isometries on the given spectral triple, with $ U $
being the corresponding unitary corepresentation. The morphisms are the morphisms in the category of compact quantum groups which
are also morphisms of the underlying quantum families of orientation preserving isometries. For further details we refer to
Section 2 of \cite{JyotDeb2}. The universal object in the category $\mathbf{\mathbf Q} (\Alg, \Hil, \Dirac)$ (if it exists) will
be denoted by $\wt{\QISO^{+}}(\Alg, \Hil, \Dirac).$

The terminology `orientation preserving' in this context was introduced in \cite{JyotDeb2} and is motivated by the fact that if
we consider the spectral triple of the type $(C^{\infty}(\tu{M}), \Hil, \Dirac)$, where $\tu{M}$ is a compact Riemannian spin
manifold, $\Hil$ is the $L^2$-space of spinors and $\Dirac$ is the Dirac operator acting on the spinors, then the category
described above corresponds precisely to the category of families of orientation preserving isometries acting on $\tu{M}$.


Theorem 2.23 of \cite{JyotDeb2} states the following:

\begin{tw}\label{existence}
Let $(\Alg, \Hil, \Dirac)$ be a spectral triple of compact type. Assume that $\Dirac$ has a one-dimensional kernel spanned by a
vector $\xi \in \Hil$ which is cyclic and separating for $\Alg$ and further that each eigenvector of $\Dirac$ belongs to $\Alg
\xi$. Then the the universal object in the category $\mathbf{Q} (\Alg, \Hil, \Dirac)$ exists. Moreover $ \wt{\QISO^+}(\Alg, \Hil,
\Dirac) $ admits a coproduct making it a compact quantum group and the universal object in the category $\mathbf{Q^{\prime}}
(\Alg, \Hil, \Dirac)$.

If we denote by ${\QISO}^+(\Alg, \Hil, \Dirac)$ the Woronowicz $ C^* $-subalgebra of $\wt{{\QISO}^+}(\Alg, \Hil, \Dirac)$
generated by elements of the form $ \left\langle \alpha_{U_0} ( a ) ( \eta \ot 1 ), \eta^{\prime} \ot 1
\right\rangle_{\wt{{\QISO}^+}(\Alg, \Hil, \Dirac)} $ where $ \eta, ~ \eta^{\prime} \in \Hil$, $ a \in  \Alg $ and  $ \left\langle
\cdot , \cdot \right\rangle_{\wt{{\QISO}^+}(\Alg, \Hil, \Dirac)} $ denotes the $ \wt{{\QISO}^+}(\Alg, \Hil, \Dirac) $ valued
inner product of $ \Hil \ot \wt{{\QISO}^+}(\Alg, \Hil, \Dirac), $ then $\wt{{\QISO}^+}(\Alg, \Hil, \Dirac)$ is isomorphic (as a
compact quantum group) to the free product ${\QISO}^+(\Alg, \Hil, \Dirac)  \star \,C(\bt).$ The compact quantum group $ {\QISO}^+
( \Alg, \Hil, \Dirac ) $ will be called the quantum group of orientation preserving isometries of $(\Alg, \Hil, \Dirac)$.
\end{tw}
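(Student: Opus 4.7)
The plan is to follow the strategy of Theorem 2.23 in \cite{JyotDeb2}. The compact-type hypothesis means that $\Dirac$ has compact resolvent, hence an orthogonal decomposition $\Hil = \bigoplus_{n \geq 0} V_n$ into finite-dimensional eigenspaces with $V_0 = \bc \xi$. Condition (i) of Definition \ref{family} forces each $U_{\phi}$ to commute with the spectral projections of $\Dirac$, so $\wt{U}$ restricts to a unitary on $V_n \ot   {\mathcal S} $ for every $n$. Thus a quantum family of orientation preserving isometries is encoded by the family of matrices with entries in $  {\mathcal S} $ describing $\wt{U}|_{V_n}$ in a fixed orthonormal basis.

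First I would construct the universal object by generators and relations. Pick once and for all an orthonormal basis of each $V_n$ and introduce formal generators $q^{(n)}_{ij}$ for the matrix entries of $\wt{U}|_{V_n}$. The relations imposed are: unitarity of every matrix $(q^{(n)}_{ij})_{ij}$ (automatically forcing $\|q^{(n)}_{ij}\| \leq 1$, so the universal enveloping $C^*$-algebra exists), together with the relations encoding condition (ii). Here the assumption that every eigenvector lies in $\Alg \xi$ is essential: for each basis vector $v \in V_n$ fix $a_v \in \Alg$ with $a_v \xi = v$; condition (ii) then becomes the statement that $\alpha_U(a_v)\xi$ expands in the closed span of the $a_w \xi$ with coefficients that are polynomials in the $q^{(n)}_{ij}$. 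Taking these polynomial identities as relations defines $\wt{\QISO^+}(\Alg, \Hil, \Dirac)$, and the tautological $\wt{U}$ built from the generators makes it the universal object in $\mathbf{Q}(\Alg, \Hil, \Dirac)$.

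Next I would promote this $C^*$-algebra to a compact quantum group. Given two copies $(  {\mathcal S} _i, U_i)$ of the universal object, one forms the tensor-product quantum family $(  {\mathcal S} _1 \ot   {\mathcal S} _2, (U_1)_{13}(U_2)_{12})$, which satisfies both conditions in Definition \ref{family}; universality yields a unital $*$-homomorphism $\Com : \wt{\QISO^+} \to \wt{\QISO^+} \ot \wt{\QISO^+}$. Coassociativity and density of $\Com(\wt{\QISO^+})(1 \ot \wt{\QISO^+})$ and $\Com(\wt{\QISO^+})(\wt{\QISO^+} \ot 1)$ follow from uniqueness of universal maps applied on triple tensor products, and the fact that $\wt{U}$ is a corepresentation is immediate from the construction. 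This simultaneously shows that $(\wt{\QISO^+}, \Com)$ is the universal object in $\mathbf{Q^{\prime}}$.

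Finally, the free product decomposition is obtained by isolating the scalar degree of freedom: if $(  {\mathcal S} , U)$ is any object of $\mathbf{Q}$ and $z$ is the canonical unitary generator of $C(\bt)$, then $(  {\mathcal S}  \star C(\bt),\, z\wt{U})$ is again in $\mathbf{Q}$, because the central scalar $z$ commutes with $\Dirac$ and the adjoint action $\alpha_{zU}$ coincides with $\alpha_U$. The subalgebra $\QISO^+(\Alg, \Hil, \Dirac)$ generated by matrix coefficients of $\alpha_{U_0}$ on $\Alg$ is insensitive to this phase, and a standard universality diagram then identifies $\wt{\QISO^+}$ with $\QISO^+ \star C(\bt)$. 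The main obstacle, and the step requiring the most care, is the boundedness and well-definedness of the universal $C^*$-algebra with infinitely many generators and relations; this is precisely controlled by the a priori bound $\|q^{(n)}_{ij}\| \leq 1$ coming from unitarity, together with the cyclicity and separating property of $\xi$, which guarantees that the matrices on different eigenspaces interact coherently through the algebra $\Alg$.
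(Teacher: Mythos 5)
First, a caveat about the comparison itself: this paper does not prove Theorem \ref{existence} --- it is quoted verbatim as Theorem 2.23 of \cite{JyotDeb2} --- so your proposal can only be measured against the proof given there. Your outline does follow essentially the same strategy as that proof: the eigenspace decomposition $\Hil=\bigoplus_n V_n$ preserved by $\wt{U}$, a universal $C^*$-algebra on the (uniformly bounded, by unitarity) matrix entries of $\wt{U}|_{V_n}$, the coproduct obtained by composing two copies of the universal family, and the isolation of the unitary $q$ with $U_0(\xi)=\xi\ot q$ to produce the free product with $C(\bt)$.

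The one step you should tighten is the encoding of condition (ii) as relations, since as written your relation is essentially vacuous: because every eigenvector lies in $\Alg\xi$, the vector $\alpha_U(a_v)(\xi\ot 1)\in V_n\ot{\mathcal S}$ \emph{automatically} expands over the $a_w\xi\ot 1$ with coefficients $c^v_w$ that are polynomials in the generators, so that statement imposes nothing. What must be imposed is the \emph{operator} identity $\alpha_U(a_v)=\sum_w a_w\ot c^v_w$ (equivalently, the vanishing of all its matrix coefficients between eigenvectors, which are finite polynomial expressions in the $q^{(n)}_{ij}$). The logic then runs: these relations imply condition (ii) for the tautological family; conversely, in any object of $\mathbf{Q}$ the operator $(\id_{K(\Hil)}\ot\phi)\alpha_U(a_v)$ lies in $\Alg''$ by hypothesis and agrees with $\sum_w\phi(c^v_w)a_w$ on $\xi$, so the separating property of $\xi$ forces the identity --- agreement on $\xi$ alone does not give operator equality until membership in $\Alg''$ is known. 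Two smaller compressions: for the coproduct one must verify condition (ii) against \emph{all} states of $\wt{\QISO^+}\ot\wt{\QISO^+}$, not just product states (this is where a reformulation in the spirit of Remark 2.4 of the present paper is used), and in the free-product step the actual content is that $\wt{\QISO^+}$ is generated by ${\QISO^+}$ together with $q$ and that a bona fide object of $\mathbf{Q}$ can be constructed over ${\QISO^+}\star C(\bt)$ to furnish the inverse morphism; ``a standard universality diagram'' conceals exactly these verifications. None of this derails the argument, but these are the points where the hypotheses on $\xi$ are actually consumed.
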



\begin{rem}

As $ U_0 $ commutes with $ \Dirac, $ we have
\begin{equation} U_0 ( \xi ) = \xi \ot q \label{U_0_xi} \end{equation}
 for a unitary element $ q $ in $ \wt{{\QISO}^+}(\Alg, \Hil, \Dirac).$ It can be seen from the proof of Theorem 2.23 of \cite{JyotDeb2} that $ C^* ( q ) \cong C ( \bt ) $ and this $ C ( \bt )  $ appears in the expression for $ \wt{{\QISO}^+}(\Alg, \Hil, \Dirac)$ as given in  Theorem \ref{existence}.

\end{rem}


Let $\Gamma$ be a finitely generated discrete group with a fixed finite symmetric set of generators $S$ and let $l$ be the
word-length function on $\Gamma$ associated with $S$. Let $(\bc[\Gamma], l^2(\Gamma), \Dirac_{\Gamma})$ denote the spectral
triple described in the previous section. It is easy to check that the conditions in Theorem \ref{existence} are satisfied, so
that the quantum group of orientation preserving isometries ${\QISO}^+(\bc[\Gamma], l^2(\Gamma), \Dirac_{\Gamma})$ exists. We
will further denote it by ${\QISO}^+(\Gamma, S)$ to stress the fact that it may depend on the initial choice of generating set.
Note that the quantum group ${\QISO}^+(\Gamma, S)$ actually acts on the `dual' object of $\Gamma$ in the sense of the quantum
group theory.
 In
concrete examples ${\QISO}^+(\Gamma, S)$ can be computed directly using Definition \ref{family}; the condition (ii) in that
definition takes a suitably simple form, which is described by the next remark, which can be deduced easily from the discussion
before Proposition 2.5.4 in \cite{BOzawa}.

\begin{rem}
Let $\qlg$ be a (unital) $C^*$-algebra, $\Hil = l^2(\Gamma)$ and $\alg = C^*_r(\Gamma)$. Let
\[X= \left\{T \in M(\qlg \ot K(\Hil)): \forall_{\phi \in S(\qlg)} (\phi \ot \id_{K(\Hil)})(T) \in \alg''\right\}.\]
Then $X=\left\{T \in M (\qlg \ot K(\Hil)): \forall_{g,h \in \Gamma} (\id_{\qlg} \ot \omega_{\delta_e, \delta_g}) (T) =
(\id_{\qlg} \ot \omega_{\delta_h, \delta_{hg}})(T)\right\},$ \\where $\omega_{\xi, \eta} (a) := \langle \xi, a \eta\rangle$ for
all $\xi, \eta \in \Hil$, $a \in K(\Hil)$.
\end{rem}

In our context we can however offer an even more direct way of computing ${\QISO}^+(\Gamma, S)$. Recall that $\tau$ denotes the
faithful trace on $C^*_r(\Gamma)$ given by the vector state associated to $\delta_e \in l^2(\Gamma)$.

Let $ ( \Alg, \Hil, \Dirac ) $ be a spectral triple satisfying the conditions of Theorem \ref{existence}. Let $ {\mathcal A}_{00}
= {\rm Lin} \{ a \in \Alg : a \xi ~ {\rm is ~ an ~ eigenvector ~ of} ~ \Dirac \}.  $ Moreover, assume that $ {\mathcal A}_{00} $
is norm dense in $ \Alg. $

Let $ \hat{\Dirac}: {\mathcal A}_{00} \rightarrow {\mathcal A}_{00} $ be defined by $ \hat{\Dirac} ( a ) \xi = \Dirac ( a \xi ) $
$(a \in \Alg_{00}$). This is well defined as $ \xi $ is a cyclic and separating vector for $ \Alg, $ so also for $\Alg_{00}$.

\begin{deft} \label{C_hat}
Let $\alg$ be a $ C^*$-algebra and $ \Alg $ be a dense $ \ast $-subalgebra such that $ ( \Alg, \Hil, \Dirac ) $ is a spectral
triple as above. Let $  {\bf \widehat{C}} ( \Alg, \Hil, \Dirac ) $ be the category with objects $ ( \qlg, \alpha ) $ such that $
\qlg $ is a compact quantum group with a $ C^*$-action
 $ \alpha $ on $\alg$ such that
\begin{rlist}
\item $ \alpha $ is $ \tau $ preserving, that is, $ ( \tau \ot \id ) \alpha ( a ) = \tau ( a )1_{\qlg} $ for all $ a $ in $ \alg $;
\item $ \alpha $ maps $ {\mathcal A}_{00} $ into $ {\mathcal A}_{00} \ot_{{\rm alg}} \qlg $;
\item  $ \alpha \hat{\Dirac} = ( \hat{\Dirac} \ot I ) \alpha .$
\end{rlist}
The morphisms in $  {\bf \widehat{C}} ( \Alg, \Hil, \Dirac ) $  are compact quantum group morphisms intertwining the respective
actions.
\end{deft}

Under the assumptions stated before the last definition we have the following result (Corollary 2.27 of \cite{JyotDeb2}):

\begin{tw}\label{D_hat}
There exists a universal object $ \widehat{\qlg} $ in $ {\bf \widehat{C}} (\Alg, \Hil, \Dirac ) $. It is isomorphic to $
{\QISO}^+ (\Alg, \Hil, \Dirac ). $
\end{tw}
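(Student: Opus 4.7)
The plan is to establish that the two categories $\mathbf{Q}'(\Alg,\Hil,\Dirac)$ (of Theorem~\ref{existence}) and $\widehat{\mathbf C}(\Alg,\Hil,\Dirac)$ are related by mutually inverse constructions sending the universal object of one to the universal object of the other, with the caveat that passing from corepresentations on $\Hil$ to actions on $\alg$ quotients out the extra $C(\bt)$ factor distinguishing $\wt{\QISO^+}$ from $\QISO^+$. Since Theorem~\ref{existence} already guarantees existence of a universal object in $\mathbf{Q}'$, the bulk of the work is to check that this correspondence is well defined on both sides.

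First I would go from $\mathbf{Q}'$ to $\widehat{\mathbf C}$. Given $(\qlg,\Delta,U)\in\mathbf{Q}'$, consider the canonical unitary $U_0$ attached to $\qlg=\QISO^+(\Alg,\Hil,\Dirac)$ (acting trivially on $\xi$, i.e. $U_0(\xi)=\xi\ot 1$, as recorded in the remark after Theorem~\ref{existence}) and define $\alpha(a)=\wt{U_0}(a\ot 1)\wt{U_0}^*$ for $a\in\Alg$. Condition (ii) of Definition~\ref{family} (combined with normality arguments) gives that $\alpha$ takes values in $\alg\ot\qlg$, so it is a genuine $C^*$-action. For (i) of Definition~\ref{C_hat}, trace preservation follows by applying $\alpha(a)$ to $\xi\ot 1$ and using $U_0(\xi)=\xi\ot 1$ together with $\tau(a)=\langle\xi,a\xi\rangle$. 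For (iii), the commutation of $U_0$ with $\Dirac$ together with the definition $\hat{\Dirac}(a)\xi=\Dirac(a\xi)$ and cyclicity of $\xi$ forces $\alpha\circ\hat{\Dirac}=(\hat{\Dirac}\ot\id)\circ\alpha$; and (ii) follows because $\hat{\Dirac}$ preserves $\Alg_{00}$ and has finite-dimensional eigenspaces, so $\alpha$ restricts algebraically to each of them.

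Conversely, given $(\qlg,\alpha)\in\widehat{\mathbf C}$, I would build a unitary corepresentation $U$ on $\Hil$ by the GNS-type prescription $U(a\xi)=\alpha(a)(\xi\ot 1)$ for $a\in\Alg$. The trace-preservation condition (i) of Definition~\ref{C_hat} is precisely what ensures that this formula extends to an isometry $\Hil\to\Hil\ot\qlg$; the density of $\alpha(\alg)(1\ot\qlg)$ in $\alg\ot\qlg$ (a consequence of $\alpha$ being a $C^*$-action) yields surjectivity onto a dense subspace, hence unitarity. That $U$ is a corepresentation follows from the coassociativity of $\alpha$. Condition (iii) together with the hypothesis that every eigenvector of $\Dirac$ lies in $\Alg\xi$ (in fact in $\Alg_{00}\xi$) ensures that $U$ commutes with $\Dirac$ on each eigenspace, hence on all of $\Dom(\Dirac)$. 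One then verifies that $\alpha_U=\alpha$ and that $(\qlg,\Delta,U)\in\mathbf{Q}'$.

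The main obstacle I anticipate is matching the two universal objects \emph{exactly}, rather than just up to the $C(\bt)$ ambiguity. Going $\mathbf{Q}'\to\widehat{\mathbf C}$ inherently loses the $C(\bt)$ piece coming from the free action on the one-dimensional kernel $\bc\xi$, so one has to be careful to use the $\QISO^+$ (not $\wt{\QISO^+}$) representative $U_0$ normalized by $U_0(\xi)=\xi\ot 1$; and going back, the constructed $U$ automatically satisfies $U(\xi)=\xi\ot 1$, which places it in the subcategory corresponding to $\QISO^+$. Checking that morphisms correspond under both directions (intertwining of actions versus intertwining of corepresentations) is then a routine algebraic verification, and universality in $\widehat{\mathbf C}$ is inherited from universality of $\QISO^+$ in $\mathbf{Q}'$.
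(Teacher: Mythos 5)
The paper offers no proof of this statement at all: it is imported wholesale as Corollary 2.27 of \cite{JyotDeb2}, so there is no internal argument to compare yours against. Your sketch is the expected one (and essentially the one carried out in that reference): mutually inverse assignments between $\mathbf{Q}'(\Alg,\Hil,\Dirac)$ and $\widehat{\mathbf{C}}(\Alg,\Hil,\Dirac)$, with the $C(\bt)$ factor on the one-dimensional kernel $\bc\xi$ accounting for the difference between $\wt{\QISO^+}$ and $\QISO^+$.

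Two steps in your outline are asserted rather than argued, and they are precisely where the content of the cited corollary lies. First, condition (ii) of Definition \ref{family} only gives that the slices $(\id\ot\phi)\alpha_U(a)$ lie in $\Alg''$; ``normality arguments'' will not upgrade this to a $C^*$-action $\alpha_U:\alg\to\alg\ot\qlg$ satisfying condition (ii) of Definition \ref{C_hat}. What is actually needed is that $\wt U$ commutes with $\Dirac$ and hence preserves each finite-dimensional eigenspace tensored with $\qlg$, that every eigenvector lies in $\Alg_{00}\xi$, and that $\xi$ is separating, so that $\alpha_U(a)(\xi\ot 1)\in\Alg_{00}\xi\ot_{\rm alg}\qlg$ forces $\alpha_U(\Alg_{00})\subseteq\Alg_{00}\ot_{\rm alg}\qlg$; norm density of $\Alg_{00}$ then yields the extension to $\alg$ and the Podle\'s density condition. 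Second, the universal object of $\mathbf{Q}'$ is $\wt{\QISO^+}$, not $\QISO^+$, so universality in $\widehat{\mathbf{C}}$ is not literally ``inherited''; given $(\qlg,\alpha)\in\widehat{\mathbf{C}}$ one obtains a morphism $\wt{\QISO^+}\to\qlg$ and must check that it restricts to a compact quantum group morphism on the Woronowicz subalgebra $\QISO^+$ intertwining the actions, using that $\QISO^+$ is by definition generated by the matrix coefficients of $\alpha_{U_0}$. Neither point is an error in your plan, but both require genuine argument rather than the one-line justifications given.
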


Thus to compute ${\QISO}^+(\Gamma, S)$ we need to consider an action $\alpha$ of ${\QISO}^+(\Gamma, S)$ on $C^*(\Gamma)$ such that there exist elements
$\{q_{\gamma', \gamma}\in {\QISO}^+ ( \Gamma, S ): \gamma,\gamma' \in \Gamma\}$ such that
\begin{equation} \alpha(\la_{\gamma}) = \sum_{\gamma' \in \Gamma:l(\gamma)=l(\gamma')} \la_{\gamma'} \ot q_{\gamma',\gamma}, \;\;\; \gamma \in \Gamma.\label{alphag}\end{equation}
The condition $ ( \alpha ( \lambda_{\gamma} ) )^*  = \alpha ( {\lambda_{\gamma}}^* )  = \alpha ( \lambda_{\gamma^{- 1}} ) $
implies that $ \sum_{\gamma' \in \Gamma:l(\gamma') = l(\gamma)} \la_{\gamma'^{- 1}} \ot q^*_{\gamma',\gamma} = \sum_{\gamma' \in
S:l(\gamma') = l(\gamma^{-1})} \la_{\gamma'} \ot q_{\gamma',\gamma^{- 1}} $. This leads to the condition
\begin{equation}  q_{\gamma^{\prime - 1},\gamma} = q^*_{\gamma^{\prime},\gamma^{-1}}, \;\;\; \gamma, \gamma' \in \Gamma, \; l(\gamma)=l(\gamma').\label{alpha_star_hom_cond}\end{equation}
 Passing to the generators and using the fact that $\alpha$ preserves the trace $\tau$ we observe that
 the matrix $[q_{t,s}]_{t,s \in S}$ is a unitary in $M_{\textup{card}(S)}({\QISO}^+ ( \Gamma, S))$. This is a consequence of the
 string of equalities ($s,t \in S$, we write $\qlg:={\QISO}^+(\Gamma, S)$ and use the Dirac delta notation):
 \begin{align*}
\delta_{s,t} \ot 1_{\qlg}&= \delta_{s^{-1} t, e} \ot 1_{\qlg} = \tau(\la_{s^{-1}t}) \ot
1_{\qlg} = (\tau \ot \id_{\qlg}) (\alpha(\la_{s^{-1}t}))\\
&= (\tau \ot \id_{\qlg}) (\alpha(\la_s)^* \alpha(\la_t)) = (\tau \ot \id) \left(\left(\sum_{s' \in S} \la_{s'} \ot q_{s',
s}\right)^* \left(\sum_{t' \in S} \la_{t'} \ot q_{t', t}\right)\right) \\
&= \sum_{s', t' \in S} \tau (\la_{s'^{-1} t'}) q_{s', s}^* q_{t', t} = \sum_{s' \in S} q_{s', s}^* q_{s', t}
 \end{align*}
and the analogous computation starting from $\tau(\la_{st^{-1}})$ (arguments of that type can be also found for example in
Section 2 of \cite{Soltan}).
 In fact ${\QISO}^+(\Gamma, S)$ is the universal $C^*$-algebra generated by the symbols $\{q_{s,t}: s,t \in S\}$ satisfying the relations
making $\alpha$ defined by \eqref{alphag} above extend, in a necessarily unique way, to a unital $^*$-homomorphism
$C^*_r(\Gamma)\to C^*_r(\Gamma) \ot {\QISO}^+ ( \Gamma, S )$ (the elements $q_{\gamma, \gamma'}$ appearing in \eqref{alphag} can
be defined inductively in terms of $q_{t,s}$). The fact that the map $\alpha$ can be considered on the level of reduced group
$C^*$-algebras, and not only the universal ones, follows from its $\tau$-preserving property. Note that the matrix
$[q_{t,s}]_{t,s \in S}$ is a fundamental unitary corepresentation of ${\QISO}^+(\Gamma, S)$. As stated above, $ \wt{{\QISO}^+} (
\Gamma, S )$ is the free product of ${\QISO}^+(\Gamma, S)$ with $C(\bt)$. If $ q $ is as in \eqref{U_0_xi},
 the unitary $\wt{U} \in M(\wt{{\QISO}^+}(\Gamma, S) \ot K(\Hil))$ inducing the action of ${\QISO}^+(\Gamma, S)$ is determined  by the conditions
\begin{equation} \label{unitaction1} \wt{U} (\delta_{e} \ot 1_{\wt{{\QISO}^+}(\Gamma,
S)}) = \delta_e \ot q, \ee \be  \label{unitaction2} \wt{U} (\delta_{\gamma} \ot 1_{\wt{{\QISO}^+}(\Gamma, S)}) =
\sum_{\gamma^{\prime} \in \Gamma, \, l(\gamma^{\prime})=l(\gamma)}  \delta_{\gamma^{\prime}} \ot  q_{\gamma^{\prime},\gamma} q
,\;\; \gamma \in \Gamma,\end{equation} where for a $C^*$-algebra $ \blg, $ we used the identification of $M(\blg \ot K(\Hil))$
with the $C^*$-algebra of all adjointable operators on a $\blg$-Hilbert module $\Hil \ot \blg.$

\section{Singly generated groups}

A straightforward calculation based on the approach described in the last section shows that ${\QISO}^+(\bz_2,\{1\})$ is isomorphic (as a compact quantum group) to $C^*(\bz_2)$ (so to $\bc^2$ as a
$C^*$-algebra). The next simplest possible cases fitting
in the framework described in the previous section are those of $\Gamma=\bz_n$, $n \geq 3$ and $\Gamma = \bz$. We compute here
the resulting  quantum groups of orientation  preserving isometries.

\subsection*{The case of $\bz_n$}
Consider $\Gamma=\bz_n$ with the standard symmetric generating set $S=\{1,n-1\}$ ($n \geq 3$). We then obtain the following:

\begin{tw} \label{thZn}
Let $n\in \bn \setminus\{1,2,4\}$. The quantum group of orientation  preserving  isometries $\QISO(\bz_n, S)$ is isomorphic  to $C^* (\bz_n) \oplus C^*(\bz_n)$ as a $C^*$-algebra. Its action on $C^*(\bz_n)$ is given by the formula
\[
\alpha ( \lambda_{1}) = \lambda_{1} \otimes A + \lambda_{{n-1}} \otimes B,\] where $A$ and $B$ are respectively identified with $
\lambda_{1} \oplus 0, 0 \oplus \lambda_{1}\in C^* (\bz_n) \oplus C^*(\bz_n)$.  The coproduct of ${\QISO}^+(\bz_n, S)$ can be read
out from the condition that $\left(\begin {array} {ccccc} A   &  B  \\ B^* & A^* \end {array} \right)$ is the fundamental
corepresentation.
\end{tw}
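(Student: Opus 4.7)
The plan is to invoke Theorem~\ref{D_hat}, which identifies $\QISO^+(\bz_n,S)$ with the universal compact quantum group $\qlg$ admitting a $\tau$-preserving $C^*$-action $\alpha$ on $C^*(\bz_n)$ commuting with $\hat{\Dirac}$. The intertwining condition forces $\alpha$ to preserve each length eigenspace, and because $W_1=\{1,n-1\}$ there exist unique $A,B\in\qlg$ with
\[\alpha(\la_1)=\la_1\otimes A+\la_{n-1}\otimes B.\]
The argument preceding \eqref{unitaction2} then shows that the matrix $\left(\begin{smallmatrix}A & B \\ B^*& A^*\end{smallmatrix}\right)$ is unitary in $M_2(\qlg)$; unpacking $MM^*=M^*M=I$ yields normality of $A$ and $B$ together with the relations $A^*A+B^*B=AA^*+BB^*=1$ and $AB+BA=A^*B+BA^*=0$. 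In particular $P:=A^*A$ commutes with $B$.

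The heart of the argument is to derive the additional vanishing $A^2B=0$. Using anticommutation one computes
\[\alpha(\la_1)^3=\la_3\otimes A^3+\la_{n-3}\otimes B^3+\la_1\otimes BA^2+\la_{n-1}\otimes AB^2.\]
Since the left-hand side equals $\alpha(\la_3)$, which must sit in the length-$l(3)$ eigenspace, and $l(3)\neq 1$ for all $n\in\{3\}\cup\{k:k\geq 5\}$---the forbidden case $n=4$ being precisely the one where $l(3)=l(1)$ in $\bz_4$---the coefficients of $\la_1$ and $\la_{n-1}$ must vanish. This gives $BA^2=AB^2=0$, equivalently (by anticommutation) $A^2B=B^2A=0$. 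Consequently
\[P^2(1-P)=P^2B^*B=B^*P^2B=(A^2B)^*(A^2B)=0,\]
forcing $\sigma(P)\subseteq\{0,1\}$, so that $P$ is a projection. Hence $|AB|^2=B^*PB=P(1-P)=0$, that is $AB=0$, and analogously the cross products $BA$, $A^*B$, $B^*A$, $AB^*$, $BA^*$ all vanish.

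With all cross products zero the identity $\alpha(\la_1)^n=1\otimes 1$ collapses to $A^n+B^n=1$; multiplying this by $A$ and using $AB=0$ gives $A^{n+1}=A$. Together with $A^*A=AA^*=P$ this shows that $A$ acts as a unitary on $\textup{range}(P)$ satisfying $A^n=1$, so the $C^*$-subalgebra it generates is isomorphic to $C^*(\bz_n)$; symmetrically $B$ generates a copy of $C^*(\bz_n)$ on $\textup{range}(1-P)$. These two subalgebras live in orthogonal corners of $\qlg$ cut out by $P$ and $1-P$ and together exhaust $\qlg$, yielding $\qlg\cong C^*(\bz_n)\oplus C^*(\bz_n)$ with $A=\la_1\oplus 0$ and $B=0\oplus\la_1$. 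The proof is closed by the converse direction: one checks that the $C^*$-algebra $C^*(\bz_n)\oplus C^*(\bz_n)$ equipped with the stated $\alpha$ and the coproduct dictated by the fundamental corepresentation $\left(\begin{smallmatrix}A & B \\ B^*& A^*\end{smallmatrix}\right)$ lies in ${\bf \widehat{C}}(\bc[\bz_n],l^2(\bz_n),\Dirac)$; this is routine since in the target algebra $AB=BA=0$ makes $\alpha(\la_1)^n=1\otimes 1$ automatic, and the trace preservation and intertwining with $\hat{\Dirac}$ are then immediate. The substantive obstacle is the derivation of $A^2B=0$ in the middle paragraph, and this is precisely where the case $n=4$ breaks the argument.
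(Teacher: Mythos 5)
Your proof is correct, and while it follows the paper's overall skeleton (universality via Theorem~\ref{D_hat}, unitarity of the $2\times 2$ fundamental corepresentation, and extraction of $A^2B=B^2A=0$ from eigenspace preservation), the central deduction is executed by a genuinely different mechanism. The paper splits into two cases: for $n=3$ it compares $\alpha(\lambda_2)$ with $\alpha(\lambda_1)^*$ to get $A^2=A^*$, extracts $AB^*=BA^*=0$ from $\alpha(\lambda_0)$, and shows $A^3$, $B^3$ are projections; for $n>4$ it runs the induction $\alpha(\lambda_k)=\lambda_k\otimes A^k+\lambda_{n-k}\otimes B^k$ up to $k=n-1$, deduces $A^{n-1}=A^*$, and then gets $A^*B=A^{n-3}A^2B=0$ before combining with the anticommutation relations and normality to reach $AB=BA=0$. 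You instead pass from $A^2B=0$ to $AB=0$ by a positivity argument: $P=A^*A$ commutes with $B$ (this uses both $AB=-BA$ and $A^*B=-BA^*$, and is correct), $P^2(1-P)=(A^2B)^*(A^2B)=0$ forces $P$ to be a projection, and then $|AB|^2=P(1-P)=0$. This handles $n=3$ and $n\geq 5$ uniformly (the only input being $l(3)\neq 1$, which fails exactly at $n=4$) and avoids both the induction and the case split; what it loses is the explicit intermediate data the paper's route produces along the way, namely the closed formula for $\alpha(\lambda_k)$ and the relation $A^{n-1}=A^*$, which make the identification of the action and of the generators of $C^*(\bz_n)\oplus C^*(\bz_n)$ immediate (you recover the equivalent information from $A^{n+1}=A$ and $A^n=P$ at the end). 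Your closing remark that one must verify the converse direction --- that $C^*(\bz_n)\oplus C^*(\bz_n)$ with the stated action is indeed an object of ${\bf\widehat{C}}$, so that no further relations can be imposed --- is a point the paper leaves implicit, and it is worth making explicit as you do.
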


\begin{proof}
The action of  ${\QISO}^+(\bz_n, S)$ on $C^*(\bz_n)$ is determined by the formula \be \label{groupalgebras_Zn_0a} \alpha (
\lambda_{1} ) = \lambda_{1} \otimes A + \lambda_{n - 1} \otimes B, \ee where $A, B$ are some elements in ${\QISO}^+(\bz_n,S)$.
Due to the fact that $\lambda_{n-1} = \lambda_1^*$ and $\alpha$ is $^*$-preserving, we must have \be \label{groupalgebras_Zn_0b}
\alpha ( \lambda_{n-1} ) = \lambda_{1} \otimes B^* + \lambda_{n - 1} \otimes A^*. \ee It follows from the discussion after
Theorem \ref{D_hat} that  the matrix $\left(\begin {array} {ccccc} A & B
\\ B^* & A^*
\end {array} \right)$ is the fundamental corepresentation of ${\QISO}^+(\bz_n, S)$ (so in particular it is a unitary in $M_2
({\QISO}^+(\bz_n, S))$). Thus $AB+BA=0$ and both $A$ and $B$ are normal. We have then \be \label{groupalgebras_Zn_0c} \alpha (
\lambda_2) = \lambda_{2} \otimes A^2 + \lambda_{n-2} \ot B^2.\ee

 Let $n=3$. Then the comparison of \eqref{groupalgebras_Zn_0b} and \eqref{groupalgebras_Zn_0c} yields
\[A^2 = A^*, B^2=B^*,\]
so that the unitarity of the fundamental corepresentation yields $A^3+B^3=1$. Further
\[\alpha(\lambda_0) = \alpha(\lambda_1) \alpha(\lambda_2)= \la_0 \ot (AA^* +BB^*) + \la_1 \ot BA^* +  \lambda_{2} \otimes AB^*.\]
As $ \alpha $ commutes with $ \hat{\Dirac}, $ this implies $AB^* = B A^*  =0.$ We claim that $A^3$ is a projection. Indeed,
\[(A^3)^2 = A^3 (1 - B^3) = A^3 - A^3 B^* B = A^3.\]
Moreover, $ {( A^3 )}^* = {( A^2 )}^3 = A^3. $
Similarly we show that $B^3$ is a projection. Finally, $AB= A {( B^2 )}^* = 0.$ Hence, $ BA = - AB =0,$  so that
$A$ and $B$ generate `orthogonal' copies of $C^*(\bz_3)$.

Let then $n>4$. Then by formulae \eqref{groupalgebras_Zn_0a} and \eqref{groupalgebras_Zn_0c} we have
\[ \alpha (\lambda_3) =  \alpha (\lambda_2) \alpha(\lambda_1)= \lambda_3 \otimes A^3 + \lambda_{n+1} \ot A^2B
+  \lambda_{n-1} \otimes B^2A + \lambda_{n-3} \ot B^3.\]
This means that $A^2B = B^2A=0$. As a consequence, for all $k=1,2,..., n-1$ we have
\be \label{induction} \alpha(\lambda_k) = \lambda_k \ot A^k + \lambda_{n-k} \ot B^{k}. \ee
Indeed, the cases $k=1,2$ are covered by formulas \eqref{groupalgebras_Zn_0a} and \eqref{groupalgebras_Zn_0c} and the inductive reasoning for $k \geq 2$ gives
\begin{align*} \alpha(\la_{k+1}) &= \alpha(\la_k) \alpha(\la_1) = ( \lambda_k \ot A^k + \lambda_{n-k} \ot B^{k})
(\lambda_{1} \otimes A + \lambda_{n - 1} \otimes B) \\&= \lambda_{k+1} \ot A^{k+1} + \lambda_{n-k-1} \ot B^{k+1},\end{align*} so
that \eqref{induction} follows.  Combining it (in the case of $k =n-1$) with \eqref{groupalgebras_Zn_0b} we see that $A^{n-1} =
A^*$, $B^{n-1}=B^*$ and further $A^*B= A^{n-1} B = A^{n-3}A^2B = 0$. This together with the equation $ A^* B + B A^* = 0 $ which
follows from the unitarity condition for the fundamental corepresentation implies that $BA^*=0.$ Hence, $ A^* $ commutes with $ B
$ and as $A$ and $B$ are normal, $ A^* $ commutes with $ B^* $ and $AB+BA=0$ implies that $AB = BA = 0.$ Thus again $A$ and $B$
are `partial unitaries' satisfying the conditions $A^{n-1} = A^*$, $B^{n-1}=B^*$ with orthogonal ranges summing to $1$. The
theorem follows.

\end{proof}

Theorem \ref{thZn}  specifically excluded $n=4$. Curiously, the quantum group of orientation preserving isometries ${\QISO}^+(\bz_4,S)$ (for $S=\{1,3\}$) is a noncommutative $C^*$-algebra. As mentioned in the introduction, similar behaviour
appears when one considers quantum symmetry groups of $n$-gons (see \cite{BanJFA} and references therein) -- the quantum symmetry
groups are commutative for all $n\in \bn$ apart from $n=4$.

\begin{tw} \label{thZ4}
The quantum orientation  preserving  isometry group ${\QISO}^+(\bz_4,S)$  is as a $C^*$-algebra the universal $C^*$-algebra
generated by two normal elements $A,B$ satisfying the following relations:
\[ AB+BA = AB^*+BA^* = A^*B+ BA^*=0, \;A^2+B^2=(A^*)^2 + (B^*)^2, \] \[ A^2 B + B^3 = B^*,  \; B^2 A + A^3 = A^*, A^4 +B^4 +2A^2B^2=1, AA^*+BB^*=1.\]
 The  action of ${\QISO}^+(\bz_4,S)$ on $C^*(\bz_4)$ is given by the formula
\[
\alpha ( \lambda_{1}) = \lambda_{1} \otimes A + \lambda_{3} \otimes B.\]  The  coproduct of ${\QISO}^+(\bz_4,S)$  can be read out
from the condition that $\left(\begin {array} {ccccc} A   &  B  \\ B^* & A^* \end {array} \right)$ is the fundamental
corepresentation. The $C^*$-algebra ${\QISO}^+(\bz_4,S)$ is not commutative.
\end{tw}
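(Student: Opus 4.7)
The plan is to exploit the universal description of $\QISO^+(\Gamma,S)$ recalled in the paragraph immediately preceding Theorem \ref{thZn}: this algebra is generated by the matrix entries $q_{s,t}$ of the fundamental corepresentation, subject to precisely those relations that make $\alpha$ defined by \eqref{alphag} extend to a $\tau$-preserving unital $\ast$-homomorphism commuting with $\hat{\Dirac}$. Writing $A=q_{1,1}$ and $B=q_{3,1}$ (the remaining entries $q_{1,3}=B^*$ and $q_{3,3}=A^*$ being forced by \eqref{alpha_star_hom_cond}), the work splits cleanly into two parts: (i) enumerating these relations explicitly in the case $\Gamma=\bz_4$, and (ii) showing that the resulting universal $C^*$-algebra is noncommutative.

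For (i) I would follow exactly the template of the proof of Theorem \ref{thZn}, running through relations in order of increasing length. Unitarity of the $2\times 2$ fundamental corepresentation (itself forced by $\tau$-preservation, as explained in the paper) combined with the unitarity of $\alpha(\lambda_1)$ in $C^*_r(\bz_4)\otimes \QISO^+(\bz_4,S)$ yields normality of $A,B$, the relation $AA^*+BB^*=1$, and the three quadratic relations $AB+BA=0$, $AB^*+BA^*=0$ and $A^*B+BA^*=0$ listed in the theorem (the fourth expected quadratic $A^*B+B^*A=0$ being automatic from these three). Commutation of $\alpha$ with $\hat{\Dirac}$ then makes the $\lambda_0$-coefficient in $\alpha(\lambda_1)^2$ (namely $AB+BA$) vanish, so $\alpha(\lambda_2)=\lambda_2\otimes(A^2+B^2)$, and self-adjointness of $\lambda_2$ forces $A^2+B^2=(A^*)^2+(B^*)^2$. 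Expanding $\alpha(\lambda_1)^3=\alpha(\lambda_1)\cdot\alpha(\lambda_2)$ and equating with $\alpha(\lambda_1)^*=\lambda_3\otimes A^*+\lambda_1\otimes B^*$, after using $AB+BA=0$ to rewrite $AB^2=B^2A$ and $BA^2=A^2B$, yields the cubic relations $A^3+B^2A=A^*$ and $A^2B+B^3=B^*$. Finally $\alpha(\lambda_1)^4=\lambda_0\otimes 1$ collapses, via $\alpha(\lambda_1)^2=\lambda_2\otimes(A^2+B^2)$ together with $A^2B^2=B^2A^2$ (a further consequence of $AB+BA=0$), to $A^4+B^4+2A^2B^2=1$. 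The reverse direction — that this list of relations suffices to make $\alpha$ extend as required — is a routine check, and identifies $\QISO^+(\bz_4,S)$ with the advertised universal algebra and its coproduct with the matrix coproduct of the stated fundamental corepresentation.

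For (ii), the plan is to produce a $\ast$-representation $\QISO^+(\bz_4,S)\to M_2(\bc)$ with noncommutative image by setting $A=\sigma_1/\sqrt{2}$ and $B=\sigma_2/\sqrt{2}$, where $\sigma_1,\sigma_2$ are the self-adjoint Pauli matrices; the identities $\sigma_i^2=I$ and $\sigma_1\sigma_2=-\sigma_2\sigma_1=i\sigma_3$ make every relation from (i) immediate, but $AB=i\sigma_3/2\ne -i\sigma_3/2=BA$, so the image — and therefore $\QISO^+(\bz_4,S)$ — is noncommutative. The \emph{main obstacle} is the bookkeeping in (i): the relation $AB+BA=0$ must be isolated first, then reused to cast $\alpha(\lambda_1)^3$ into the form matching $\alpha(\lambda_1)^*$, to deduce $A^2B^2=B^2A^2$, and to verify that the slightly redundant-looking list of quadratic relations is internally consistent. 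It is precisely this interlocking that blocks, at $n=4$, the clean inductive argument used in the proof of Theorem \ref{thZn} for $n\geq 5$, and that leaves just enough room for the Pauli representation to witness the noncommutativity.
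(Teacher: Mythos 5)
Your proposal is correct and takes essentially the same route as the paper: the paper's own proof simply states that the argument of Theorem \ref{thZn} carries over (which is precisely your part (i)) and then exhibits the same Pauli-matrix representation $A=\sigma_1/\sqrt{2}$, $B=\sigma_2/\sqrt{2}$ to witness noncommutativity. Your write-up in fact supplies more of the bookkeeping (which relation comes from unitarity of the corepresentation, which from unitarity of $\alpha(\lambda_1)$, which from $\alpha(\lambda_1)^3=\alpha(\lambda_1)^*$ and $\alpha(\lambda_1)^4=\lambda_0\otimes 1$) than the paper, which leaves those details to the reader.
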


\begin{proof}
The proof follows as in Theorem \ref{thZn}, we leave the details to the reader. For the last statement it suffices to exhibit two
anticommuting selfadjoint operators whose squares sum to 1 and which do not commute. One such example is given by $A=\frac{1}{\sqrt{2}}
\sigma_1$, $B=\frac{1}{\sqrt{2}} \sigma_2$ (where $\sigma_1, \sigma_2$ are Pauli matrices).
\end{proof}

Already in this simple case we can see that if one chooses a non-minimal generating set in $\Gamma$, the resulting quantum
orientation  preserving  isometry group will be different. In particular if we put  $S'=\{1,2,3\}$, the quantum group
${\QISO}^+(\bz_4,S')$ is not isomorphic to the one obtained in the above theorem. This can be shown by analysing the quotients of
the ${\QISO}^+(\bz_4,S)$ and ${\QISO}^+(\bz_4,S')$ by respective commutator ideals and checking that in the first case one
obtains the algebra $C(\tu{Sym}_2 \times \tu{Sym}_2)$ and in the second the algebra $C(\tu{Sym}_4)$.

\subsection*{The case of $\bz$}

Consider now $\Gamma=\bz$ with the standard symmetric generating set $S=\{1,-1\}$. The next theorem describes the quantum
isometry group ${\QISO}^+(\bz, S)$.

\begin{tw}\label{bzS}
The quantum orientation  preserving  isometry group  ${\QISO}^+(\bz, S)$ is isomorphic (as a compact quantum group) to the
(commutative) compact quantum group $C(\bt \rtimes \bz_2)$. Its action on $C^*(\bz)\cong C(\bt)$ is given  by the standard
(isometric) action of the group $\bt \rtimes \bz_2$ on $\bt$.
\end{tw}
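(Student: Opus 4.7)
The plan is to work with the coaction $\alpha$ of $\qlg := {\QISO}^+(\bz, S)$ on $C^*_r(\bz)$ provided by Theorem \ref{D_hat}. By \eqref{alphag} applied with $S = \{1,-1\}$, I can write $\alpha(\lambda_1) = \lambda_1 \ot A + \lambda_{-1} \ot B$ for elements $A, B \in \qlg$, and the discussion after Theorem \ref{D_hat} identifies $\begin{pmatrix} A & B \\ B^* & A^* \end{pmatrix}$ as a fundamental unitary corepresentation. Unitarity produces the relations $AA^* + BB^* = A^*A + B^*B = 1$ and $AB + BA = 0$, while expanding $\alpha(\lambda_1)\alpha(\lambda_{-1}) = \alpha(\lambda_{-1})\alpha(\lambda_1) = 1 \ot 1$ and reading off the $\lambda_{\pm 2}$ coefficients supplies the further cross-relations $AB^* = BA^* = A^*B = B^*A = 0$.

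The step I expect to be the main obstacle is promoting these quadratic relations to the stronger vanishing $AB = BA = 0$. The line of attack is as follows. Using $A^*B = 0$ and $1 = AA^* + BB^*$, I first get $B = BB^*B$ and symmetrically $A = AA^*A$, so $A$ and $B$ are partial isometries. Combining $AB = -BA$ with $A^*B = 0$ yields $A^*AB = -(A^*B)A = 0$, whence $AB = (AA^*A)B = A(A^*AB) = 0$ and consequently $BA = 0$. With $AB = BA = 0$ in hand, $(A^*A)(BB^*) = A^*(AB)B^* = 0$ and similarly $(BB^*)(A^*A) = 0$; substituting into $1 = AA^* + BB^* = A^*A + B^*B$ then gives both $A^*A \leq AA^*$ and $AA^* \leq A^*A$, so $A^*A = AA^*$ and $A$ is normal, and likewise for $B$.

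Setting $p := A^*A = AA^*$, the vanishing $pB = Bp = 0$ (a consequence of $AB = BA^* = 0$) makes $p$ central, so $\qlg = p\qlg \oplus (1-p)\qlg$. In the first summand $A$ is a unitary, in the second $B$ is a unitary, and no further relations couple the two pieces; hence $\qlg$ is the universal $C^*$-algebra generated by two unitaries in complementary central corners, which is $C(\bt) \oplus C(\bt)$.

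To match the compact quantum group structure with $C(\bt \rtimes \bz_2)$, where $\bz_2 = \{\pm 1\}$ acts on $\bt$ by inversion, I parametrize $\bt \rtimes \bz_2$ as $\bt \sqcup \bt$ by pairs $(\zeta, \epsilon)$ with product $(\zeta_1, \epsilon_1)(\zeta_2, \epsilon_2) = (\zeta_1 \zeta_2^{\epsilon_1}, \epsilon_1 \epsilon_2)$, and identify $A$ and $B$ with $\zeta\,\chi_{\{\epsilon = 1\}}$ and $\zeta\,\chi_{\{\epsilon = -1\}}$ respectively. A direct check confirms that the group coproduct on $C(\bt \rtimes \bz_2)$ reproduces $\Delta(A) = A \ot A + B \ot B^*$ and $\Delta(B) = A \ot B + B \ot A^*$ dictated by the fundamental corepresentation; and under $C^*(\bz) \cong C(\bt)$ with $\lambda_1 \leftrightarrow z$ the formula for $\alpha$ recovers the standard action $(\zeta, \epsilon) \cdot z_0 = \zeta z_0^\epsilon$ of $\bt \rtimes \bz_2$ on $\bt$ by isometries. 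Once the algebraic reduction in the second paragraph is carried out, both the $C^*$-algebra decomposition and the identification with the semidirect product are essentially mechanical.
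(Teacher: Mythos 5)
Your proof is correct, but it takes a genuinely different route from the paper's. The paper does not compute the relations directly: it observes that the eigenspaces of $\hat{\Dirac}$ for $\Gamma=\bz$ coincide with those of the classical Laplacian on $C(\bt)$, invokes the computation of the Laplacian-based quantum isometry group of $\bt$ from Theorem 2.4 of \cite{JyotDeb1} to conclude that ${\QISO}^+(\bz,S)$ is a quantum subgroup of $C(\mathrm{ISO}(\bt))$ (hence commutative, with $A=UP$, $B=UP^{\perp}$ for a unitary $U$ and a commuting projection $P$), and then checks that $C(\mathrm{ISO}(\bt))$ is itself a sub-object of ${\QISO}^+(\bz,S)$ in the category $\widehat{\mathbf{C}}$ to force equality. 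Your argument is instead a self-contained algebraic computation in the style of the paper's own treatment of $\bz_n$ (Theorem \ref{thZn}): you derive $AB^*=BA^*=A^*B=B^*A=0$ from the homomorphism property, upgrade $AB+BA=0$ to $AB=BA=0$, and identify $p=A^*A=AA^*$ as a central projection splitting the algebra into two corners each generated by a single unitary. (Minor remark: normality of $A$ and $B$ already follows directly from comparing the $(1,1)$-entries of $UU^*=I$ and $U^*U=I$, as the paper notes in the $\bz_n$ proof, so your detour through $A^*A\leq AA^*\leq A^*A$ is unnecessary though not wrong.) What each approach buys: the paper's is shorter but leans on an external result and on the coincidence of eigenspaces with the classical Laplacian; yours is elementary and self-contained, establishes commutativity without reference to the classical isometry group, and exhibits the universal $C^*$-algebra and the coproduct explicitly, so the identification with $C(\bt\rtimes\bz_2)$ falls out by direct inspection. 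The one point you should make fully explicit is the converse direction of universality -- that any $A,B$ satisfying your final list of relations do yield a $\tau$-preserving $*$-homomorphism $C(\bt)\to C(\bt)\ot\qlg$ commuting with $\hat{\Dirac}$ -- but since $AB=BA=0$ collapses $\alpha(\lambda_n)$ to $\lambda_n\ot A^n+\lambda_{-n}\ot B^n$ and the two central corners can be treated separately, this is indeed the mechanical check you claim it to be.
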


\begin{proof}
Let  the action of ${\QISO}^+(\bz, S)$ on $C^*(\bz)$ be determined by the formula
\[ \alpha ( \lambda_{1} ) = \lambda_{1} \otimes A + \lambda_{- 1 } \otimes B, \]
where $A,B \in {\QISO}^+(\bz, S)$. Via the canonical identification of $C^*(\bz)$ with $C(\bt)$ the action can be written as \be
\label{actCT}\alpha (z) = z \otimes A + \overline{z} \otimes B.\ee
 One can see that the conditions for the action on $C^*(\bz)$
to commute with the operator $ \hat{\Dirac} $ constructed from the length function are exactly the same as the conditions for the
action on $C(\bt)$ to commute with the standard Laplacian on $C(\bt)$. Thus from the proof of Theorem 2.4 of \cite{JyotDeb1} we
obtain that ${\QISO}^+(\bz, S)$ is a quantum subgroup of the classical isometry group of $ \bt $ and hence is a commutative
$C^*$-algebra. More specifically, $A=UP=PU$, $B=UP^{\perp}$ where $U$ is a unitary and $P$ is a projection, $UP=PU$. Thus $
\alpha ( z^n ) = \lambda_{1} \otimes A^n + \lambda_{- 1 } \otimes B^n $ for all $ n \geq 1 $ implying that $ C ( {\rm ISO} ( \bt
) ) $ is a sub-object of ${\QISO}^+(\bz, S)$ in the category ${\bf \widehat{C}}$ (see Definition \ref{C_hat}). Hence,
${\QISO}^+(\bz, S)$ has to coincide with the standard isometry group of the classical manifold $\bt$.
\end{proof}

It might seem surprising that the  ${\QISO}^+(\bz, S)$ admits also isometries of $\bt$ which do not preserve the
orientation in the classical sense. This is caused by the fact that the Dirac operator coming from the length function on $\bz$
has (in the $L^2(\bt)$-picture) the spectral decomposition of the type $\sum_{n \in \bz} |n| z^n$ and its eigenspaces coincide
 with these of the classical Laplacian, and not the usual Dirac operator on $L^2(\bt)$.

When $\Gamma=\bz^2$ the standard choice of generators leads to the Dirac operator on $C^*(\bz^2) \cong C(\bt^2)$ whose
eigenspaces are different from those arising from the natural Laplacian on $L^2(\bt^2)$.  We were not able to determine whether
the resulting quantum group of orientation preserving isometries $\QISO^+(\bz^2, \{(1,0), (0,1)\})$ is commutative or not.

\section{The case of $\Gamma=S_3$ with different sets of generators} \label{S3Section}

In this section we compute the quantum isometry group associated to the smallest noncommutative group, the group of permutations
of the three-element set. We will compute ${\QISO}^+(S_3, S) $ for two different set of generators; the first of them is natural if we
view $S_3$ as a permutation group generated by its transpositions, whereas the second is natural when one thinks of $S_3$ as a dihedral
group.

\subsection*{Generating set of transpositions}
We start with the set of generators $S=\{s,t\}$, where $s,t$ are two different transpositions in $S_3$, say $s = (1,2),t = (2, 3)$. Then $ s $ and $ t $ satisfy the relations $ s^2 = t^2 = e $ and $ tst = sts,$ where $e$ as usual denotes the identity
element of the group.

Let $ \alpha $ be the action of ${\QISO}^+(S_3, S)$  on $ C^*(S_3)$. As explained in the section above, there exist elements $A,
B, C, D $ in $ {\QISO}^+(S_3, S)$ such that
\begin{equation} \label{S3action} \alpha ( \lambda_{s} ) = \lambda_{s} \otimes A + \lambda_{t} \otimes B,
\;\;\; \alpha ( \lambda_{t} ) = \lambda_{s} \otimes C + \lambda_{t} \otimes D. \end{equation}

Now we derive some relations between $ A, B, C, D $ which follow from the fact that $ \alpha $ is a $ \ast $-homomorphism and
that it commutes with the operator $ \hat{\Dirac} $ associated to the generating set $S.$

\blmma \label{form1} Let $A,B,C,D$ be the elements of ${\QISO}^+(S_3, S)$ determined by the formula \eqref{S3action} for the action of
${\QISO}^+(S_3, S)$ on $C^*(S_3)$. Then the following hold:
  \be \label{groupalgebra_S_3_firstset1} A^2 + B^2 = 1, \ee
  \be \label{groupalgebra_S_3_firstset2} AB = 0, \ee
  \be \label{groupalgebra_S_3_firstset3} BA = 0, \ee
  \be \label{groupalgebra_S_3_firstset4} C^2 + D^2 = 1, \ee
  \be \label{groupalgebra_S_3_firstset5} CD = 0, \ee
  \be \label{groupalgebra_S_3_firstset6} DC = 0, \ee
  \be \label{groupalgebra_S_3_firstset7} AC + BD = 0, \ee
  \be \label{groupalgebra_S_3_firstset8} CA + DB = 0, \ee
  \be \label{groupalgebra_S_3_firstset9} DAC = CBD = 0, \ee
  \be \label{groupalgebra_S_3_firstset10} ADB = BCA = 0, \ee
  \be \label{groupalgebra_S_3_firstset11} DAD + CBC = ADA + BCB, \ee
  \be \label{groupalgebra_S_3_firstset12} A^* = A, B^* = B, C^* = C, D^* = D. \ee
 \elmma

\begin{proof}
We derive \eqref{groupalgebra_S_3_firstset1} - \eqref{groupalgebra_S_3_firstset3}  from $ s^2 = e$,
\eqref{groupalgebra_S_3_firstset4}- \eqref{groupalgebra_S_3_firstset6} from $t^2 = e$, \eqref{groupalgebra_S_3_firstset7} and
\eqref{groupalgebra_S_3_firstset8} by equating the coefficient of $ \lambda_{e} $ in $ \alpha ( \lambda_{s} \lambda_{t} ) $ and $
\alpha ( \lambda_{t} \lambda_{s} ) $ to zero, \eqref{groupalgebra_S_3_firstset9} and \eqref{groupalgebra_S_3_firstset10} by
equating coefficients of $ \lambda_{s} $ and $ \lambda_{t} $ in $ \alpha ( \lambda_{tst} ) $ and $ \alpha ( \lambda_{sts} ) $ to
zero, \eqref{groupalgebra_S_3_firstset11} from $\alpha ( \lambda_{tst} ) = \alpha ( \lambda_{sts} ) $ and finally
\eqref{groupalgebra_S_3_firstset12} from the facts that $ \alpha ( {\lambda_{s}}^* ) = ( \alpha ( \lambda_{s} ) )^*, ~ \alpha (
{\lambda_{t}}^* ) = ( \alpha ( \lambda_{t} ) )^* .$

\end{proof}

 \bthm \label{qisoS31}
 The  quantum group of orientation preserving isometries ${\QISO}^+(S_3, S) $ for the generating set built of transpositions
($S=\{s,t\}$) is isomorphic to $C^* ( S_3 ) \oplus C^*(S_3)$ as a $C^*-$algebra. Its action on $C^*(S_3)$ is given by the formula
\eqref{S3action}, where $ A, B, C, D $ are respectively identified with  $ \lambda_s \oplus 0, 0 \oplus \lambda_t,   0 \oplus
\lambda_s, \lambda_t \oplus 0 \in C^* (S_3) \oplus C^* (S_3)$.  The coproduct of ${\QISO}^+(S_3, S)$ can be read out from the
condition that $ \left(  \begin {array} {cccc} A   &  B  \\ C & D \end {array} \right ) $ is the fundamental corepresentation.
\ethm

\begin{proof}
We use the notations of Lemma \ref{form1}. The commutation relations listed in that lemma  imply that $ {( A^2 )}^2 = A A^2 A = A
( 1 - B^2 ) A = A^2 - A B B A = A^2.$ Thus, as $A^2$ is self-adjoint, $A^2$ is a projection, to be denoted by $P$. Similarly,
$B^2$ is a projection.  As $ A^2 + B^2 = 1, $ we have $ B^2 = P^{\bot}$. Proceeding in the same way,  we obtain $ C^2 = Q $ and
$D^2 = Q^{\bot}$ for another projection $Q$.

Multiplying \eqref{groupalgebra_S_3_firstset7} by $A $ on the left and $C$ on the right, we obtain $ PQ = 0. $ Hence, $ Q \leq
P^{\bot}. $ Similarly, multiplying the same formula by  $B$ on the left and $D$ on the right yields $ P^{\bot} Q^{\bot} = 0 $
which implies $ Q^{\bot} \leq P$. This implies that $P^{\bot} = Q$. Thus $A^2 = D^2 = P, \,B^2 = C^2 = P^{\bot}.$

Next, from \eqref{groupalgebra_S_3_firstset11} we deduce that $ DAD - ADA = BCB - CBC$. As $ \tu{Ran} ( DAD - ADA
) \subseteq \tu{Ran} ( P )$, $ \tu {Ran} (BCB - CBC) \subseteq \tu{Ran} P^{\bot},$ this implies
 \be \label{groupalgebra_S_3_firstset13} DAD = ADA, \ee
 \be \label{groupalgebra_S_3_firstset14} BCB = CBC. \ee

Now, we note that, by virtue of Theorem \ref{D_hat}, the equations \eqref{groupalgebra_S_3_firstset1} --
\eqref{groupalgebra_S_3_firstset12} together with the condition that the matrix $ \left(\begin {array} {cccc} A   &  B  \\ C & D
\end {array} \right)$ is a unitary  form all the conditions necessary to ensure that $C^* \{A, B, C, D\}$ is an object of $ {\bf
\widehat{C}} ( S_3, S ) $ (recall Definition \ref{C_hat}). Recall that it follows from the discussion after Theorem\ref{D_hat} the matrix $ \left(\begin {array} {cccc} A   &  B  \\
C & D \end {array} \right)$ is a fundamental unitary corepresentation of $ {\QISO}^+(S_3,S)$.

The action of the antipode $\kappa$ of a compact quantum group on the matrix elements of a finite-dimensional  unitary
corepresentation $U^\beta \equiv ( u_{pq}^\beta)$ is given by $\kappa (u_{pq}^\beta ) =( u_{qp}^\beta )^* $ (see
\cite{VanDaele}).  Thus in our situation we have $ \kappa ( A ) = A^* = A ,\,  \kappa (B) = C^* = C, ~ \kappa ( C ) = B, ~ \kappa
( D ) = D^* = D.$

Applying the antipode to the equations \eqref{groupalgebra_S_3_firstset2}  and \eqref{groupalgebra_S_3_firstset5} and then taking
adjoints,  we arrive at the equations $AC = 0$ and $BD = 0$. Thus also $DAC = CBD = 0$ and  the conditions in
\eqref{groupalgebra_S_3_firstset9} follow. Similarly we deduce the equalities \eqref{groupalgebra_S_3_firstset7},
\eqref{groupalgebra_S_3_firstset8} and \eqref{groupalgebra_S_3_firstset10}. Thus, $C^* \{A, B, C, D\} $ is the universal
$C^*$-algebra generated by elements $A,B,C,D$ such that the matrix $ \left(  \begin {array} {cccc} A   &  B  \\ C & D \end
{array} \right ) $ is a unitary and the relations \eqref{groupalgebra_S_3_firstset1}-\eqref{groupalgebra_S_3_firstset6} along
with \eqref{groupalgebra_S_3_firstset12}-\eqref{groupalgebra_S_3_firstset14} hold. Then it is easy to see that $ C^* \{ A, B, C,
D \} \cong C^* ( S_3 ) \oplus C^* ( S_3 ) $ via the map sending $ A, D, C, B $ to $ \lambda_s \oplus 0, ~ \lambda_t \oplus 0, ~ 0
\oplus \lambda_s, ~ 0 \oplus \lambda_t$. The rest of the statements are easy consequences of the proof above and the general
discussion in Section 2.
\end{proof}


\subsection*{Generating set of a transposition and a cycle}
This time we consider a set of generators of $S_3$ given by a transposition and a cycle: $S'=\{t',s'\}$, where say $ s^{\prime} = (1 2)$,  $t^{\prime} = (1 2 3)$ . The generating elements satisfy the conditions $ s^{\prime 2} = t^{\prime 3} = e, ~
t^{\prime}s^{\prime} = s^{\prime} t^{\prime - 1}.$

Let $ \alpha $ be the action of ${\QISO}^+(S_3, S')$  on $ C^*(S_3)$. This time there exist elements $E,
F,G,H,K,L $ in $ {\QISO}^+(S_3, S')$ such that
\begin{equation} \label{2S3action}
\alpha (\lambda_{t^{\prime}}) = \lambda_{t^{\prime}} \otimes E + \lambda_{t^{\prime - 1}} \otimes F + \lambda_{s^{\prime}}
\otimes G, \;\;\; \alpha ( \lambda_{s^{\prime}} ) = \lambda_{t^{\prime}} \otimes H + \lambda_{t^{\prime - 1}} \otimes K +
\lambda_{s^{\prime}} \otimes L. \end{equation} The following series of lemmas list the conditions that have to be satisfied by
$E,F,G,H,K,L \in {\QISO}^+(S_3, S')$. \blmma \label{S3lem1}
$$ HK + KH + L^2 = 1, $$
$$ H^2 = 0, $$
$$ K^2 = 0, $$
$$ HL + LK = 0, $$
$$ KL + LH = 0.$$
\elmma
\begin{proof} These follow from the relation $s^{\prime 2}=e.$
\end{proof}

\blmma \label{S3lem2}
$$ EF + FE + G^2 = 0, $$
$$ FG + GE = 0, $$
$$ EG + GF = 0 $$
\elmma
\begin{proof}
The above follow from the fact that the coefficients of $e, s^{\prime}t^{\prime}$ and  $s^{\prime} t^{\prime - 1} $ in $ \alpha (t^{\prime 2})= \alpha( t^{\prime - 1})$ are zero.
\end{proof}

\blmma \label{S3lem3}
$$ E^2 F + ( EG + GF ) G = 0, $$
$$ E^2 G + ( EG + GF ) F = 0, $$
$$ F^2 E + ( FG + GE ) G = 0, $$
$$ F^2 G + ( FG + GE ) E = 0, $$
$$ ( FG + GE ) F + ( EG + GF ) E = 0, $$
 $$ E^3 + F^3 = 1.$$
\elmma
\begin{proof}
The first five equations follow from the fact that the coefficients of $t^{\prime}, s^{\prime}t^{\prime}, t^{\prime 2}$, $
s^{\prime} t^{\prime - 1}, s^{\prime}$ in $ \alpha(t^{\prime 3})$ are zero. The last equation is a consequence of the identity
$\alpha(t^{\prime 3}) = e \otimes 1.$
\end{proof}

\blmma \label{S3lem4}
$$ EK + FH + GL = HE^2 + K F^2 = 0, $$
$$ FK = K E^2 + L ( FG + GE ) = 0, $$
$$ H ( FG + GE ) + K ( EG + GF ) = 0, $$
$$ EH = H F^2 + L ( E G + G F ), $$
$$ EL + GK = K ( FG + GE ) + L E^2, $$
$$ FL + GH = H ( EG + GF ) + L F^2. $$
\elmma
\begin{proof}
Here we use the fact that $t^{\prime}s^{\prime} = s^{\prime} t^{\prime - 1}$. As the coefficients of
$\lambda_{e},\lambda_{t^{\prime}},\lambda_{s^{\prime}}$ in $\alpha(\lambda_{t^{\prime}s^{\prime}}) = \alpha (\lambda_{s^{\prime} t^{\prime - 1}})$ are zero, we have the first three equations. The rest follows by comparing the coefficients of $ \lambda_{t^{\prime 2}}, \lambda_{s^{\prime} t^{\prime - 1}}, \lambda_{s^{\prime} t^{\prime}}$ once again in the equality $\alpha(\lambda_{t^{\prime}s^{\prime}}) = \alpha (\lambda_{s^{\prime} t^{\prime - 1}})$.
\end{proof}

\blmma \label{S3lem5}
$$ HF + KE + LG = 0, $$
$$ KF = 0. $$
\elmma
\begin{proof}
These follow by equating the coefficients corresponding to  $e$ and  $t$ in $\alpha(\lambda_{s^{\prime}t^{\prime}})$ to zero.
\end{proof}

\blmma \label{S3lem6}
$$ H = K^*, ~ L = L^*, E^2 = E^*, ~ F^2 = F^*, ~ G^* = 0 , $$
$$ FG + GE = 0, ~ EG + GF = 0.$$
\elmma
\begin{proof}
These follow from $\alpha({\lambda_{s^{\prime}}}*) = (\alpha(\lambda_{s^{\prime}}))^* $ and $\alpha({\lambda_{t^{\prime}}}*) = \alpha(\lambda_{t^{\prime 2}}) = (\alpha (\lambda_{t^{\prime}}))^* .$
\end{proof}

We are now ready to formulate the counterpart of Theorem \ref{qisoS31}.

\begin{tw} \label{qisoS32}
   ${\QISO}^+(S_3, S') $ for the generating set built of a transposition and of a cycle
($S'=\{s',t'\}$) is isomorphic to $C^* ( S_3 ) \oplus C^*(S_3)$ as a $ C^*$-algebra. Its action on $C^*(S_3)$ is given by the
formula
\[
\alpha ( \lambda_{t^{\prime}} ) = \lambda_{t^{\prime}} \otimes E + \lambda_{t^{\prime - 1}} \otimes F,  \;\;\; \alpha(\lambda_{s^{\prime}})= \lambda_{s^{\prime}} \otimes L,\]
where $E, F, L$ are respectively identified with  $ \lambda_{t'} \oplus 0, 0 \oplus \lambda_{t'},  \lambda_{s'} \oplus
\lambda_{s'} \in C^* ( S_3 ) \oplus C^* ( S_3 )$.  The coproduct of ${\QISO}^+(S_3, S')$ can be read out from the
condition that \\$\left(\begin {array} {ccccccc} E   &  F  & 0\\ F^2 & E^2 & 0 \\ 0 & 0 & L\end {array} \right)$ is the fundamental corepresentation.
\end{tw}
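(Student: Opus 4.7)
The plan is to mirror the strategy of Theorem \ref{qisoS31}: by Theorem \ref{D_hat}, ${\QISO}^+(S_3, S')$ is the universal $C^*$-algebra generated by $E, F, G, H, K, L$ subject to the relations collected in Lemmas \ref{S3lem1}--\ref{S3lem6} together with the unitarity of the fundamental $3 \times 3$ corepresentation on $l^2(S_3)$ indexed by $\lambda_{t'}, \lambda_{t'^{-1}}, \lambda_{s'}$; the target is to identify this algebra with $C^*(S_3) \oplus C^*(S_3)$.

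I would begin with Lemma \ref{S3lem6}, which supplies the first and most useful batch of simplifications: $G = 0$, $H = K^*$, $L = L^*$, $E^* = E^2$, $F^* = F^2$. Hence $E, F$ are normal with $EE^* = E^*E = E^3$ and $FF^* = F^*F = F^3$, and applying $^*$ also gives $(E^2)^* = E$ and $(F^2)^* = F$. With $G = 0$, combining $EF + FE = 0$ (Lemma \ref{S3lem2}) with $F^2 E = 0$ (Lemma \ref{S3lem3}) yields $(EF)^* = F^2 E^2 = (F^2 E) E = 0$, so $EF = FE = 0$. Then $E^3 + F^3 = 1$ together with $EF = 0$ forces $E = E(1 - F^3) = E \cdot E^3 = E^4$; since $E$ commutes with $EE^* = E^3$ by normality, this is equivalent to $E = EE^*E$, i.e.\ $E$ is a partial isometry with source--range projection $P := E^3$. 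Symmetrically $F$ is a partial isometry with complementary projection $F^3 = 1 - P$.

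The decisive step is to force $H = K = 0$ from unitarity of the fundamental corepresentation. The $(1,1)$-entry of $M^*M$ (where $M$ is the $3\times 3$ matrix whose first column is $(E, F^2, H)^T$) reads
\[
E^*E + (F^2)^*F^2 + H^*H = E^3 + F \cdot F^2 + H^*H = E^3 + F^3 + H^*H = 1 + H^*H,
\]
so $H^*H = 0$ and hence $H = K = H^* = 0$. Lemma \ref{S3lem1} now collapses to $L^2 = 1$, making $L$ a self-adjoint unitary. Combining $L^2 = 1$ with the intertwiners $EL = LE^2$ and $FL = LF^2$ (Lemma \ref{S3lem4} with $G = 0$) yields the conjugation relations $LEL = L \cdot LE^2 = E^2 = E^*$ and $LFL = F^*$, from which a short calculation shows that $L$ also commutes with $P$.

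Finally, with $P$ central in $\Alg := C^*\{E, F, L\}$, the algebra splits as $P\Alg P \oplus (1-P)\Alg(1-P)$. On the first summand, $E$ restricts to a unitary of order three ($E^3 = P$ is the identity there) and $PLP$ to a self-adjoint unitary with $(PLP)E(PLP) = E^{-1}$; these are precisely the defining relations of $C^*(S_3)$, and the second summand carries the symmetric picture in $F$. The assignment $E \mapsto \lambda_{t'} \oplus 0$, $F \mapsto 0 \oplus \lambda_{t'}$, $L \mapsto \lambda_{s'} \oplus \lambda_{s'}$ then extends to a $^*$-isomorphism onto $C^*(S_3) \oplus C^*(S_3)$, with inverse supplied by the universal property of $C^*(S_3)$, and the coproduct on ${\QISO}^+(S_3, S')$ is read off from the fundamental corepresentation as in Theorem \ref{qisoS31}. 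The genuinely nontrivial step is the middle one --- extracting the partial-isometry structure of $E, F$ and the complementarity of their range projections from the algebraic identities --- after which the unitarity trick that kills $H$ and $K$ and the subsequent structural identification are essentially formal.
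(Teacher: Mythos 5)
Your proposal is correct and follows the same overall architecture as the paper's proof (reduce everything to the relations of Lemmas \ref{S3lem1}--\ref{S3lem6} plus unitarity of the $3\times 3$ corepresentation, kill $G$, $H$, $K$, then identify $C^*\{E,F,L\}$ with $C^*(S_3)\oplus C^*(S_3)$), but it deviates at the one genuinely delicate step. The paper obtains $H=K=0$ by applying the antipode $\kappa$ to the corepresentation matrix, exactly as in the proof of Theorem \ref{qisoS31}; you instead extract $EF=FE=0$ and the partial-isometry identities $E=E^4$, $F=F^4$ first, and then read $H^*H=0$ off the $(1,1)$-entry of $M^*M$ via $E^*E+(F^2)^*F^2=E^3+F^3=1$. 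Both arguments are valid; yours has the advantage of being entirely elementary (no appeal to the antipode formula $\kappa(u_{pq})=u_{qp}^*$ from \cite{VanDaele}), at the cost of needing the partial-isometry structure of $E$ and $F$ up front, which the paper only uses implicitly in its final ``it is easy to see'' step. You also supply more detail than the paper on the terminal identification -- centrality of $P=E^3$, the splitting $P\Alg P\oplus(1-P)\Alg(1-P)$, and the dihedral relations $(PLP)E(PLP)=E^{-1}$ in each corner -- which is a welcome elaboration of what the paper leaves to the reader. The only point at the same (low) level of explicitness as the paper is the verification that the proposed generators of $C^*(S_3)\oplus C^*(S_3)$ actually satisfy all the listed relations, so that the universal algebra surjects both ways; this is routine and neither you nor the paper writes it out.
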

\begin{proof}
As the general way of arguing to this is identical to that in the proof of Theorem \ref{qisoS31}, we only provide the sketch of
the calculations involved. Let $E,F,G,H,K,L \in {\QISO}^+(S_3, S')$ be as in the lemmas preceding the theorem. Lemma \ref{S3lem6}
implies that $G = 0$, so that we have $\alpha(\lambda_{t^{\prime- 1}}) = \lambda_{t^{\prime}} \otimes F^2 + \lambda_{t^{\prime-
1}} \otimes E^2.$ Using the ordered vector space basis $\delta_{t^{\prime}}, \delta_{t^{\prime - 1}}, \delta_{s^{\prime}}$ we see
that the matrix $\left( \begin {array} {cccc}E   &  F & 0  \\ F^2 & E^2 & 0 \\ H & K & L \end {array} \right )$ is a unitary
corepresentation of ${\QISO}^+(S_3,S')$. Applying the antipode $\kappa$ we deduce that $ H = K = 0$. Using the above statements
together with Lemmas \ref{S3lem1}--\ref{S3lem6} we are left with the following set of conditions that have to be satisfied by
$E,F$ and $L$:
\[ L^2 = 1, \; EF + FE = 0, \; E^2 F = 0, \; F^2 E = 0, \; E^3 + F^3 = 1, \]
\[ EL = L E^2, \; FL = L F^2, \; L = L^*, \; E^2 = E^*, \; F^2 = F^*, \]
plus the set of relations following from the requirement that the matrix $\left(  \begin {array} {cccc}
     E   &  F & 0  \\ F^2 & E^2 & 0 \\ 0 & 0 & L \end {array} \right ) $ is a unitary. The latter statement translates to the fact that $ \left(  \begin {array} {cccc}
     E   &  F  \\ F^* & E^* \end {array} \right ) $ is a unitary and that $L$ is a unitary.
Thus the final set of equations we obtain is the following:
\[  EE^* + F F^* = 1, \;EF + FE = 0, \;F^* F + E^* E = 1, \; E^* E + F F^* = 1, \;E^* F + F E^* = 0, \]
\[ F^* E + E F^* = 0, \;
 F^*
F + E E^* = 1, \;E^2 F = 0,\; F^2 E = 0, \;E^3 + F^3 = 1,\]
\[E L = L E^2, \;F L = L F^2, \; L = L^*, \;E^2 = E^*, \;F^2 = F^*,
\;L^* L = L L^* = 1. \]
 From the above it is easy to see that the map from ${\QISO}^+(S_3, S')$ to $ C^* ( S_3 ) \oplus C^*( S_3 ) $ defined by sending $ L, E, F $ respectively to $
\lambda_{s^{\prime}} \oplus \lambda_{s^{\prime}}, ~ \lambda_{t^{\prime}} \oplus 0, ~ 0 \oplus \lambda_{t^{\prime}} $
is an isomorphism.
\end{proof}


It follows from Theorems \ref{qisoS31} and \ref{qisoS32} that the canonical actions of ${\QISO}^+(S_3,S)$ and ${\QISO}^+(S_3,S')$
on $C^*(S_3)$are different; moreover the formulas for coproduct with respect to the given $C^*$-algebraic isomorphisms
${\QISO}^+(S_3,S)\cong C^*(S_3) \oplus C^*(S_3)\cong {\QISO}^+(S_3,S')$ also differ. More specifically, if we write $s=(1 2)$, $t
= (2 3)$, $t'=(1 2 3)$, and denote for brevity $(\lambda_g \oplus 0)\in C^*(S_3) \oplus C^*(S_3)$ by $\hat{\lambda}_g$ and $(0
\oplus \lambda_g)$ by $\tilde{\la}_g$, the coproduct in the first case is given by the ($^*$-homomorphic extension) of the
formulas
\[\Com_1 (\hat{\la}_s) = \hat{\la}_s \ot \hat{\la}_s + \tilde{\la}_t \ot \tilde{\la}_s,\]
\[ \Com_1 (\hat{\la}_t) = \hat{\la}_t \ot \hat{\la}_t + \tilde{\la}_s \ot \tilde{\la}_t,\]
\[ \Com_1 (\tilde{\la}_s) = \tilde{\la}_s \ot \hat{\la}_s + \hat{\la}_t \ot \tilde{\la}_s,\]
\[ \Com_1 (\tilde{\la}_t) = \tilde{\la}_t \ot \hat{\la}_t + \hat{\la}_s \ot \tilde{\la}_t,\]
and in the second case by the ($^*$-homomorphic extension) of the formulas
\[ \Com_2 (\hat{\la}_{t'}) = \hat{\la}_{t'} \ot \hat{\la}_{t'} + \tilde{\la}_{t'} \ot \tilde{\la}_{t'^2},\]
\[ \Com_2 (\tilde{\la}_{t'}) = \hat{\la}_{t'} \ot \tilde{\la}_{t'} + \tilde{\la}_{t'} \ot \hat{\la}_{t'^2},\]
\[ \Com_2 (\hat{\la}_s + \tilde{\la}_{s} ) = (\hat{\la}_s + \tilde{\la}_{s} ) \ot (\hat{\la}_s + \tilde{\la}_{s} ).\]
Now the computation of values of the above coproducts on for example $\hat{\la}_s + \tilde{\la}_{s}$ confirms the statement from
the beginning of this paragraph.

\section{Free group on $2$ generators} \label{freegroupqiso}

Let $\bF_2$ denote the free group on two generators, let $s,t \in \bF_2$ be the generators and let $l$ be the word-length
function induced by $S=\{s,s^{-1}, t, t^{-1}\}$. The resulting Dirac operator on $l^2(\Gamma)$ will be denoted simply by
$\Dirac$.


Let $\qlg$ denote the compact quantum group ${\QISO}^+ (\bc[\bF_2],  l^2(\freeg), \Dirac),$ whose existence is guaranteed by the
arguments in Section \ref{genth}. Let $\alpha: \freeal \to \freeal \ot \qlg$ denote the canonical action of $\qlg$ on $\freeal$.
Again by the arguments of Section \ref{genth} the action $\alpha$ must `preserve' the span of $\{\la_s, \la_{s^{-1}}, \la_t,
\la_{t^{-1}}\}$, so that we must have (taking into account the selfadjointness of $\alpha$)
\[ \alpha(\la_s) = \la_s \ot A + \la_{s^{-1}} \ot B +  \la_t \ot C + \la_{t^{-1}} \ot D,\]
\[ \alpha(\la_{s^{-1}}) = \la_s \ot B^* + \la_{s^{-1}} \ot A^* +  \la_t \ot D^* + \la_{t^{-1}} \ot C^*,\]
\[ \alpha(\la_t) = \la_s \ot E + \la_{s^{-1}} \ot F +  \la_t \ot G + \la_{t^{-1}} \ot H,\]
\[ \alpha(\la_t^{-1}) = \la_s \ot F^* + \la_{s^{-1}} \ot E^* +  \la_t \ot H^* + \la_{t^{-1}} \ot G^*,\]
where $A,B,C,D,E,F,G,H$ are certain elements of $\qlg$.
Moreover, the matrix
\[ U =\begin{bmatrix}
                           A & B & C & D \\
                           B^* & A^* & D^* & C^* \\
                           E & F & G & H \\
                           F^* & E^* & H^* & G^* \\
                         \end{bmatrix}
                         \]
is a unitary in $M_4(\qlg)$. Note that

\[ U^* =\begin{bmatrix}
                           A^* & B & E^* & F \\
                           B^* & A & F^* & E \\
                           C^* & D & G^* & H \\
                           D^* & C & H^* & G \\
\end{bmatrix}.
                         \]
The additional relations (apart from these following from the unitarity conditions $U^* U = I = UU^*$) are given by the fact that
\begin{equation}\alpha(\la_s) \alpha(\la_{s^{-1}}) = \alpha(\la_{s^{-1}}) \alpha(\la_{s}) = \alpha(\la_t) \alpha(\la_{t^{-1}}) =
\alpha(\la_{t^{-1}}) \alpha(\la_{t}) = I_{\freeal \ot \qlg}= \la_e \ot I_{\qlg}.\label{alphafree}\end{equation} The new relations
arising in this way are of the form:
\begin{equation} AB^* = AC^* = AD^*= BC^* = BD^* = CD^*= A^*B = A^*C = A^*D = B^*C =B^* D = C^*D=0,\label{e1}\end{equation}
\begin{equation} EF^* = EG^* = EH^* = FG^* = FH^* = GH^*= E^*F = E^*G = E^*H = F^*G =F^* H = G^*H=0.\label{e2}\end{equation}
As a consequence of the unitarity of $U$ we have in particular:
\begin{equation} B^*B + A^*A + C^*C +D^*D=I_{\qlg},\label{e3}\end{equation}
\begin{equation} F^*F + E^*E + G^*G +H^*H=I_{\qlg}.\label{e4}\end{equation}
Multiplying \eqref{e3} on the left by respectively $A,B,C$ or $D$ and using \eqref{e1} we obtain that each of the operators
$A,B,C,D$ is a partial isometry ($AA^*A = A, BB^*B=B$, etc.). Similarly using \eqref{e2} and \eqref{e4} we obtain that $E,F,G,H$
are partial isometries. Denote by $P_A$ the range projection of $A$ (so that $P_A= AA^*$) and by $Q_A$ the initial projection of
$A$ (so that $Q_A = A^*A$) and introduce the analogous notation for  range and initial projections of $B,C,D,E,F,G$ and $H$. The
`diagonal' parts of the unitarity conditions for $U$ read in this language as
\begin{equation} P_A + P_B + P_C + P_D =  Q_A + Q_B + Q_C + Q_D =  P_E + P_F + P_G + P_H =  Q_E + Q_F + Q_G + Q_H =I_{\qlg}  \label{diag1}\end{equation}
\begin{equation} Q_A + P_B +Q_E + P_F = Q_B + P_A + Q_F +P_E = Q_C + P_D + Q_G + P_H = Q_D + P_C + Q_H + P_G= I_{\qlg}.  \label{diag2}\end{equation}
Note that \eqref{diag1} and \eqref{diag2} are equivalent to the fact that
\[ \begin{bmatrix}
                           P_A & P_B & P_C & P_D \\
                           P_E & P_F & P_G & P_H \\
                           Q_B & Q_A & Q_D & Q_C \\
                           Q_F & Q_E & Q_H & Q_G \\
                         \end{bmatrix}
                         \]
is a magic unitary in the sense of Wang (see \cite{Wang}, \cite{Teosurvey}). This leads to the following theorem:

\begin{tw} \label{mainfree}
 The quantum orientation preserving isometry group of the spectral triple $(\bc[\bF_2], l^2(\freeg), \Dirac)$, denoted by $\qlg:={\QISO}^+(\freeg, S)$, is the universal $C^*$-algebra generated by partial isometries $A,B,C,D,E,$ $F,G,H$ such that if $P_A, P_B, \ldots $ denote respectively the range projections of $A, B, \ldots$  and $Q_A, Q_B, \ldots$ denote the initial projections of $A,B, \ldots$ then the matrix
 \begin{equation} \begin{bmatrix}
                           P_A & P_B & P_C & P_D \\
                           P_E & P_F & P_G & P_H \\
                           Q_B & Q_A & Q_D & Q_C \\
                           Q_F & Q_E & Q_H & Q_G \\
                         \end{bmatrix}
                         \label{magicunit} \end{equation}
is a magic unitary (all entries are orthogonal projections, the sum of each row/column is equal to 1). The coproduct in $\qlg$ is determined by the condition that the (unitary) matrix
\[ U =\begin{bmatrix}
                           A & B & C & D \\
                           B^* & A^* & D^* & C^* \\
                           E & F & G & H \\
                           F^* & E^* & H^* & G^* \\
                         \end{bmatrix}
                         \]
is a fundamental corepresentation of $\qlg$. In particular the restriction of the coproduct of $\qlg$ to the $C^*$-algebra
generated by the entries of the matrix in \eqref{magicunit} coincides with the coproduct on Wang's $A_4$ -- the universal compact
quantum group acting on 4 points.
\end{tw}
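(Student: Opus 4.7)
The plan is to split the proof into three stages: (i) showing that the stated relations are necessary, (ii) showing they are sufficient via the universal property guaranteed by Theorem \ref{D_hat}, and (iii) identifying the restricted coproduct with the one on Wang's $A_4$.

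For (i), most of the work has already been carried out in the discussion preceding the theorem. I would only need to consolidate: the $*$-homomorphism identities \eqref{alphafree} yield relations \eqref{e1}--\eqref{e4}; combining \eqref{e3}, \eqref{e4} with \eqref{e1}, \eqref{e2} forces each of $A,B,\ldots,H$ to satisfy $X X^* X = X$, so they are partial isometries and the projections $P_\bullet$, $Q_\bullet$ are well-defined. The row sums in the proposed magic unitary \eqref{magicunit} are exactly \eqref{diag1}, i.e., the diagonal entries of $UU^*$. The column sums match \eqref{diag2}, read off from the diagonal of $U^*U$ using partial-isometry identities such as $B^*B+A A^*= Q_B+P_A$. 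Orthogonality of the nonzero entries in each row (resp.\ column) is then immediate from \eqref{e1}, \eqref{e2}. This shows ${\QISO}^+(\freeg, S)$ is a quotient of the universal $C^*$-algebra $\mathcal{R}$ defined by the relations stated in the theorem.

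For (ii), let $\mathcal{R}$ be the universal $C^*$-algebra presented as in the theorem and define $\alpha_0\colon \bc[\freeg]\to \bc[\freeg]\otimes_{\mathrm{alg}} \mathcal{R}$ on generators by the four formulas above and extend multiplicatively. Since $\freeg$ is \emph{free}, the only algebraic consistency conditions to check are $\alpha_0(\lambda_s)\alpha_0(\lambda_{s^{-1}})=\alpha_0(\lambda_{s^{-1}})\alpha_0(\lambda_s)=\la_e\ot 1$ and the two analogues for $t$, and these are exactly what produces \eqref{e1}--\eqref{e4}, all of which hold in $\mathcal{R}$ by construction. Thus $\alpha_0$ is well-defined. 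The trace-preservation condition (i) of Definition \ref{C_hat} is automatic because $\tau(\la_g)=0$ for $g\ne e$ and $\alpha_0(\la_g)$ lies in the span of $\la_{g'}\otimes\mathcal{R}$ with $l(g')=l(g)\ge 1$ for $g\ne e$; the $\hat{\Dirac}$-intertwining (iii) is built into the homogeneity of $\alpha_0$ on $W_n$. Finally, the compatibility of $\mathcal{R}$ with the coproduct defined by declaring $U$ to be a fundamental corepresentation amounts to checking that the defining relations of $\mathcal{R}$ are preserved by $\Com\otimes\Com$ applied entry-wise; this is a direct bookkeeping exercise on the $8$ partial-isometry generators using the magic-unitary axioms to kill cross terms. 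This makes $(\mathcal{R},\Com,\alpha_0)$ an object of $\widehat{\mathbf C}(\bc[\freeg],l^2(\freeg),\Dirac)$, so by Theorem \ref{D_hat} it is a quotient of ${\QISO}^+(\freeg,S)$, and combined with (i) the two $C^*$-algebras coincide.

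For (iii), denote the matrix in \eqref{magicunit} by $M=[m_{ij}]$. Since $\Com$ is the fundamental-corepresentation coproduct for $U$, expansion of $\Com(P_A)=\Com(AA^*)$ produces $\sum_{k,l}U_{1k}U^*_{l1}\otimes (\cdots)$; the orthogonality relations \eqref{e1}, \eqref{e2} annihilate all cross terms, leaving $\Com(P_A)=\sum_{k}P_{U_{1k}}\otimes m_{1k}$, which is exactly Wang's formula $\Com(m_{1k})=\sum_j m_{1j}\otimes m_{jk}$ applied to $M$. An identical computation works for the other entries. By the universal property of $A_4$, the $C^*$-subalgebra generated by the entries of $M$, equipped with the restricted coproduct, is a quotient of $A_4$; that it is all of $A_4$ follows from the fact that the magic-unitary axioms are the only defining relations.

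The step I expect to be the main obstacle is verifying the consistency of the coproduct on $\mathcal{R}$ in stage (ii): one must show that applying the entry-wise coproduct to, say, the partial-isometry identity $AA^*A=A$ or to an off-diagonal orthogonality like $AB^*=0$ produces a relation that holds in $\mathcal{R}\otimes\mathcal{R}$, which requires the full strength of the magic-unitary structure for all the products of generators to telescope correctly. The other steps are essentially bookkeeping.
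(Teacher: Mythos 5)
There is a genuine gap in stage (ii), and it sits exactly where you declare the step to be automatic. You write that ``the $\hat{\Dirac}$-intertwining (iii) is built into the homogeneity of $\alpha_0$ on $W_n$'', but $\alpha_0$ is \emph{not} homogeneous by construction: for a reduced word $w=w_1\cdots w_n$ the multiplicative extension gives $\alpha_0(\la_w)=\sum_{\wt{w}}\la_{\iota_n(\wt{w})}\ot q_{w,\wt{w}}$, where the sum runs over \emph{all} formal words $\wt{w}$ of length $n$ in the generators, including non-reduced ones, and for those $\iota_n(\wt{w})$ has length strictly less than $n$. So a priori $\alpha_0(\la_w)$ has components on eigenspaces of $\Dirac$ of lower eigenvalue, and one must prove that for every $\gamma$ with $l(\gamma)<n$ the total coefficient $\sum_{\wt{w}:\,\iota_n(\wt{w})=\gamma}q_{w,\wt{w}}$ vanishes. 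This is the substantive combinatorial content of the paper's proof: the set of offending formal words is partitioned according to the position of the first cancellation, each block factorises as $q_{u,\wt{u}}\,q_{yz,xx^{-1}}\,q_{v,\wt{v}}$, and everything reduces to the identity $\sum_{x\in S}q_{yz,xx^{-1}}=0$ for $y\neq z^{-1}$, which is where the magic-unitary relations (orthogonality of range and initial projections lying in the same column of \eqref{magicunit}) are actually used. Note that your trace-preservation argument quietly relies on the same unproven homogeneity (you need the $\la_e$-coefficient of $\alpha_0(\la_g)$ to vanish for $g\neq e$), so it cannot be used to patch the hole.

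By contrast, the step you single out as the main obstacle --- verifying that the coproduct determined by declaring $U$ a fundamental corepresentation is compatible with the relations of $\mathcal{R}$ --- is comparatively routine and can even be sidestepped: Theorem \ref{existence} provides universality in the category $\mathbf{Q}$ of quantum \emph{families}, whose objects are mere $C^*$-algebras, so the coproduct on ${\QISO}^+(\freeg,S)$ comes from the general theory once the two surjections between $\mathcal{R}$ and ${\QISO}^+(\freeg,S)$ are in place. Your stages (i) and (iii) follow the paper's route (the paper likewise treats (iii) as a direct computation on the fundamental corepresentations, though you should be careful with the row indexing of \eqref{magicunit} when matching $\Com(P_A)=\sum_k P_{U_{1k}}\ot P_{U_{k1}}$ against Wang's formula). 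To complete the proof you must supply the cancellation argument for the $\hat{\Dirac}$-intertwining; ``freeness of $s$ and $t$'' enters precisely there, not merely in the well-definedness of $\alpha_0$ on generators.
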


\begin{proof}
The discussion before the formulation of the theorem in conjunction with the results of Section \ref{genth} implies that $\qlg$
is generated by the partial isometries $A,B,C,D$, $E,F,G,H$ and that the matrix in \eqref{magicunit} is a magic unitary. It
remains to show that there are no other relations involved. A direct computation shows that both the unitarity of $U$ and the
conditions in \eqref{alphafree} follow from the fact that the matrix in \eqref{magicunit} is a magic unitary.

It remains to show that the $^*$-homomorphism $\alpha$ obtained as an extension of the formulas for $\alpha(\la_s)$ and
$\alpha(\la_t)$ (and their inverses) given in the  beginning of this section commutes with $ \hat{\Dirac}$ (see the discussion
before Definition \ref{C_hat}). This is essentially a consequence of freeness of $s$ and $t$. Here is a detailed argument: for
each $n \in \bn$ let $\wt{W}(n)=\{\wt{w} = x_1 \cdots x_n: x_1, \ldots, x_n \in S\}$. Note that elements of $\wt{W}_n$ have to be
thought of as formal expressions; there is a natural map $\iota_n$ from $\wt{W}_n$ to $\freeg$, but it is not injective for $n
\geq 2$. The set $W_n$ (the notation introduced earlier) can be identified via $\iota_n^{-1}|_{W_n}$ with a set of these words in
$\wt{W}_n$ which are in reduced form. As usual we write $W_0 = \wt{W}_0 =\{e\}$. For each $z,x \in S$ let $q_{x,z}\in \qlg$ be
such that $\alpha(\la_x) = \sum_{z \in S} \la_{z} \ot q_{x,z}$. Define for each $n \in \bn$, $w \in W_n$, $\wt{w} \in \nnred$
\begin{equation} q_{w, \wt{w}} := \prod_{i=1}^n q_{w_i, \wt{w}_i}\label{qww} \end{equation} and put
$q_{e,e}=1_{\qlg}$. The homomorphic property of $\alpha$ implies that for each $w \in W_n$ we have
\[ \alpha(\la_w) = \sum_{\wt{w} \in \wt{W}_n} \la_{\iota_n(\wt{w})} \ot q_{w, \wt{w}}.\]
 The fact that $\alpha$ commutes with $ \hat{\Dirac} $  is equivalent to the following fact: for all $n \in
\bn, w \in W_n$ and $\gamma \in \freeg$ such that $l(\gamma) < n$ there is \begin{equation} \sum_{\wt{w}\in
\nnred:\,\iota_n(\wt{w}) = \gamma} q_{w, \wt{w}} = 0.\label{0form}\end{equation}  We will now show that the  last statement
holds.

Fix  $n \in \bn, n \geq 2$ and let $w \in W_n, \gamma \in \freeg, l(\gamma) <n$. Then the set $B_{\wt{w}, \gamma}:=\{\wt{w}\in
\nnred:\,\iota_n(\wt{w}) = \gamma\}$ can be split into disjoint subsets ($k=0,\ldots,n-2$) \[B_{\wt{w}, \gamma, k}:=\{\wt{w}\in
\nnred:\,\iota_n(\wt{w}) = \gamma, \; \wt{w}_{k+1} =(\wt{w}_{k+2})^{-1}, \wt{w}_{i+1} \wt{w}_{i+2}\neq e \textup{ for } i <
k\}.\] Further each of the sets $B_{\wt{w}, \gamma, k}$ splits into disjoint (possibly empty) subsets ($\wt{u}\in \knred, \wt{v}
\in \lnred$)
\[B_{\wt{w}, \gamma, k, \wt{u},\wt{v}}:= \{\wt{w} \in B_{\wt{w}, \gamma, k}: \wt{w}_1 \cdots \wt{w}_k = \wt{u}, \;  \wt{w}_{k+3} \cdots \wt{w}_n =
\wt{v}\}\] (the definitions above need to be interpreted in an obvious way when $k=0$ or $k=n-2$). To prove \eqref{0form} it then
suffices to show that for each fixed $k\in \{0, \ldots, n-2\}$, $\wt{u}\in \knred, \wt{v} \in \lnred$ we have
\[ \sum_{\wt{w}\in B_{\wt{w}, \gamma, k, \wt{u},\wt{v}}}
 q_{w, \wt{w}} = 0.\]
Let $u \in W_k, v \in W_l$ and $y,z \in S$ be uniquely determined elements such that $w= u yz v$ ($y \neq z^{-1}$). Then it
follows from the definition in \eqref{qww} that
\[\sum_{\wt{w}\in B_{\wt{w}, \gamma, k, \wt{u},\wt{v}}}
 q_{w, \wt{w}}  = \sum_{x \in S} q_{u, \wt{u}} \, q_{yz, xx^{-1}} \, q_{v, \wt{v}}.\]
It is therefore sufficient to check that if $y,z \in S$ and $y \neq z^{-1}$ then $\sum_{x \in S}   q_{yz, xx^{-1}} =0$. This
follows from a direct computation.

The last statement in the theorem follows once again from a direct computation based on the form of the respective fundamental
corepresentations.
\end{proof}

Similar result holds for free groups of $n$ generators for all $n >2$. In each case denoting free generators by $t_1, \ldots,
t_n$ and putting $S_n=\{t_1, t_1^{-1}, \ldots, t_n, t_n^{-1}\}$ we obtain that $\QISO^+(\mathbb{F}_n, S_n)$ is generated by $2n$
partial isometries such that the corresponding range and initial projections satisfy relations coming from Wang's $A_{2n}$ group.
In the quantum group language one is tempted to say that $A_{2n}$ is a quotient of $\QISO^+(\mathbb{F}_n, S_n)$ (remembering that
$A_{2n}$ is a $C^*$-subalgebra of $\QISO^+(\mathbb{F}_n, S_n)$). In fact the proof of the statement that the subalgebra of
$\QISO^+(\mathbb{F}_n, S_n)$ generated by the entries of the matrix in \eqref{magicunit} is isomorphic with $A_{2n}$ involves the
analysis of the category of corepresentations of $\QISO^+(\mathbb{F}_n, S_n)$. It turns out that  $\QISO^+(\mathbb{F}_n, S_n)$
can be thought of as a `free quantum version' of $\mathbb{T}^n \ltimes D_{2n}$, the isometry group of $\mathbb{T}^n$, and is also
closely connected to the quantum hyperoctahedral group (\cite{hyperoct}). All these facts are proved in \cite{TeoAdam}.

\section{The real structure for the spectral triples on group $C^*$-algebras and the associated quantum isometry groups}

The original notion of a real structure for an (odd) spectral triple $(\Alg, \Hil, \Dirac)$ was introduced in \cite{book}, see
also \cite{Connesgrav}. We will adopt the following definition:

\begin{deft} \label{tildeJ}
For a spectral triple $(\Alg, \Hil, \Dirac)$ a real structure is given by a (possibly unbounded, invertible) closed anti-linear
operator $\wt{J} $ on $\Hil$ such that $ {\tu{Dom}} (\Dirac) \subseteq \tu{Dom} (\wt{J})$, $\wt{J}\,  \Dom(\Dirac) \subseteq \Dom
(\Dirac)$, $\wt{J} $ commutes with $\Dirac$ on $\Dom (\Dirac)$, and the antilinear isometry $J$ obtained from the polar
decomposition of $\wt{J}$ satisfies the usual conditions $J^2 = I, J\Dirac = \Dirac J.$ We say that the quadruple $(\Alg, \Hil,
\Dirac, \wt{J})$ satisfies the first order condition if for all $a, b \in \Alg$ the commutators $ [a, JbJ^{-1}] $ and $ [J a
J^{-1}, [\Dirac, b]]$ vanish.
\end{deft}

Let $\Gamma$ be again a finitely generated discrete group. The triple $(\bc[\Gamma], l^2(\Gamma), \Dirac_{\Gamma})$ has a natural
real structure (Proposition 3.5 in \cite{Conti}) induced by the antiunitary operator $J:l^2(\Gamma) \to l^2(\Gamma)$ defined by
the (antilinear extension of) formula
\begin{equation}\label{realstr}J(\delta_{\gamma}) =  \delta_{\gamma^{-1}}, \gamma \in
\Gamma.\end{equation}

We define $ \wt{J} = J $ (this is compatible with the notation in Definition \ref{tildeJ}). As can be deduced from Proposition
3.15 of \cite{Conti} the quadruple $(\bc[\Gamma], l^2(\Gamma), \Dirac_{\Gamma}, \wt{J})$ practically never satisfies the first
order condition. The following weakening of that condition was considered in \cite{DLPS} and \cite{Dabrowski}; it will be
sufficient for our purposes.

\begin{deft}
Let $(\Alg, \Hil, \Dirac, \wt{J})$ be a spectral triple with a real structure. We say that $(\Alg, \Hil, \Dirac, \wt{J})$
satisfies the first order condition modulo compacts if for all $a, b \in \Alg$ the commutators $ [a, JbJ^{-1}] $ and $ [J a
J^{-1}, [\Dirac, b]]$ are compact.
\end{deft}

It is immediate that if $\Gamma$ is finite, then $(\bc[\Gamma], l^2(\Gamma), \Dirac_{\Gamma}, J)$ satisfies the first order
condition modulo compacts in a trivial way. We also have the following easy fact:

\begin{lem} \label{modcomp}
Let $n \in \bn$ and let $\Gamma$ denote either the free group on $n$ generators or the free abelian group on $n$ generators, in
both cases with the standard symmetric generating sets. Then $(\bc[\Gamma], l^2(\Gamma), \Dirac_{\Gamma}, J)$ satisfies the first
order condition modulo compacts.
\end{lem}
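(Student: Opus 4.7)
The plan is to check the two commutator-compactness conditions separately: the first commutator will in fact vanish identically, and the second will turn out to be a finite-rank operator in the standard basis $\{\delta_\gamma\}_{\gamma \in \Gamma}$.

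First I would compute that for any $\gamma, \eta \in \Gamma$,
\[
J \la_\gamma J^{-1} \delta_\eta \;=\; J \la_\gamma \delta_{\eta^{-1}} \;=\; J \delta_{\gamma \eta^{-1}} \;=\; \delta_{\eta \gamma^{-1}},
\]
so $J \la_\gamma J^{-1}$ is the right-regular operator $\rho_{\gamma^{-1}}$. Since the left and right regular representations of $\Gamma$ mutually commute, $[a, J b J^{-1}] = 0$ for all $a, b \in \bc[\Gamma]$, which is (much) stronger than compactness.

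For the second condition, by $\bc$-linearity it suffices to treat $a = \la_{\gamma'}$ and $b = \la_\gamma$ for $\gamma, \gamma' \in \Gamma$. Using $[\Dirac, \la_\gamma] \delta_\eta = (l(\gamma\eta) - l(\eta)) \delta_{\gamma\eta}$ together with the formula above for $J \la_{\gamma'} J^{-1}$, a short calculation gives
\[
[J \la_{\gamma'} J^{-1}, [\Dirac, \la_\gamma]] \delta_\eta \;=\; c_{\gamma, \gamma'}(\eta)\, \delta_{\gamma \eta \gamma'^{-1}},
\]
\[
c_{\gamma, \gamma'}(\eta) \;:=\; \bigl(l(\gamma\eta) - l(\eta)\bigr) - \bigl(l(\gamma\eta\gamma'^{-1}) - l(\eta\gamma'^{-1})\bigr).
\]
Since $\eta \mapsto \gamma\eta\gamma'^{-1}$ is a bijection of $\Gamma$, the right-hand side is a weighted permutation of an orthonormal basis, hence of finite rank (in particular compact) as soon as the function $c_{\gamma, \gamma'}$ has finite support on $\Gamma$. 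Thus the whole problem reduces to a finite-support statement about the length function, to be verified separately in the two cases.

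For $\Gamma = \bz^n$ the length is the $\ell^1$-norm, so once $|\eta_i|$ exceeds $\max(|\gamma_i|, |\gamma'_i|)$ for every coordinate $i$, both parenthesised differences in the definition of $c_{\gamma, \gamma'}(\eta)$ evaluate to $\sum_i \gamma_i \cdot \tu{sgn}(\eta_i)$ (using that $\tu{sgn}(\eta_i - \gamma'_i) = \tu{sgn}(\eta_i)$ in this regime), and they therefore cancel. For $\Gamma = \bF_n$ I would argue that if $l(\eta) > l(\gamma) + l(\gamma')$, and $k \leq l(\gamma)$, $m \leq l(\gamma')$ denote respectively the number of letters cancelled on the left when forming $\gamma\eta$ and on the right when forming $\eta\gamma'^{-1}$, then $k + m < l(\eta)$, so these cancellations involve disjoint segments of the reduced word of $\eta$ and no additional cancellation occurs at the new junctions when $\gamma$ and $\gamma'^{-1}$ are concatenated simultaneously; consequently
\[
l(\gamma\eta) = l(\gamma) + l(\eta) - 2k, \quad l(\eta\gamma'^{-1}) = l(\eta) + l(\gamma') - 2m,
\]
\[
l(\gamma\eta\gamma'^{-1}) = l(\gamma) + l(\eta) + l(\gamma') - 2k - 2m,
\]
so both parenthesised differences equal $l(\gamma) - 2k$, forcing $c_{\gamma, \gamma'}(\eta) = 0$. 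In either case the exceptional set of $\eta$ is finite, finishing the proof. The main (and essentially only) obstacle is the bookkeeping of cancellations in the free group, though the key point—that left and right cancellations occupy disjoint segments of a sufficiently long reduced word—makes the argument routine.
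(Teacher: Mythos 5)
Your proposal is correct and follows essentially the same route as the paper's proof: identify $J\la_{\gamma'}J^{-1}$ with a right-translation operator so that the first commutator vanishes, compute the second commutator explicitly on the basis $\{\delta_\eta\}$, and observe that the resulting coefficient function has finite support, so the operator is finite rank (the paper states the finite-support claim as ``easy to check''; you supply the verification). One small inaccuracy: in the $\bz^n$ case the threshold $|\eta_i|>\max(|\gamma_i|,|\gamma'_i|)$ is not quite enough to guarantee $\tu{sgn}(\gamma_i+\eta_i-\gamma'_i)=\tu{sgn}(\eta_i-\gamma'_i)$ (take $\gamma_i=-2$, $\gamma'_i=2$, $\eta_i=3$); you need $|\eta_i|>|\gamma_i|+|\gamma'_i|$, which of course still leaves only finitely many exceptional $\eta$.
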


\begin{proof}
It is enough to show that for all $g,h \in \Gamma$ the operators $[\lambda_g, J\lambda_hJ^{-1}] $ and $ [J \lambda_g J^{-1},
[\Dirac, \lambda_h]]$ are compact. It is easy to check that $J\lambda_hJ^{-1} = \rho_{h}$, where $\rho$ denotes the right regular
representation of $\Gamma$. Therefore the first commutator above always vanishes. It thus suffices to show that the operator
$T_{g,h}:=[\rho_{g^{-1}}, [\Dirac, \lambda_h]]$ is compact for arbitrary $g,h \in \Gamma$. Straightforward computation yields \be
\label{Toper} T_{g,h} \delta_a = (l(ha) - l(a) - l(hag) +l(ag)) \, \delta_{hag}, \;\;\;a \in \Gamma.\ee Now in both cases
considered in the lemma ($\Gamma= \mathbb{F}_n$ and $\Gamma= \bz^n$) it is easy to check that for fixed choice of $g$ and $h$
there are only finitely many elements $a\in \Gamma$ such that the expression in \eqref{Toper} is non-zero. Hence the operator
$T_{g,h}$ is actually even finite rank.
\end{proof}

\subsection*{Quantum groups of orientation and real structure preserving isometries}

Given a $C^*$-algebra $\alg,$ define the antilinear map $\wt{J_{\alg}}:{\alg} \to {\alg}$ by $\wt{J_{\alg}}(a) = a^*$.

Recall the notion of the quantum group of orientation and real structure preserving isometries of a spectral triple introduced in
\cite{Deb_real}.

\begin{deft}
Suppose that the spectral triple  $(\Alg,\Hil,\Dirac)$ is equipped with a real structure given by $\wt{J}$. We say that a quantum
family of orientation preserving isometries $({\mathcal S},U)$ preserves the real structure if the following equality holds on
${\Dom} (\Dirac)$: \be \label{realstrcomm} ( \wt{J} \ot \wt{J}_{\mathcal S} )  U = U  \wt{J}.\ee In case the $C^*$-algebra
$\mathcal{S}$ has a coproduct $\Delta$ such that $({\mathcal S}, \Delta)$ is a compact quantum group and $U$ is a unitary
corepresentation of $ ({\mathcal S},\Delta)$ on $\Hil$ we say that $({\mathcal S},\Delta)$ acts by orientation and real structure
preserving isometries on the spectral triple.
\end{deft}

One is thus naturally led to consider the  categories $ {\bf Q_{{\rm real} }} ( \Alg, \Hil, \Dirac, \wt{J} ) $ and \\$ {\bf
Q^{\prime}_{\rm real} } ( \Alg, \Hil, \Dirac, \wt{J} )$ analogous to $ {\bf Q} ( \Alg, \Hil, \Dirac ) $ and $ {\bf Q^{\prime}}(
\Alg, \Hil, \Dirac )$, respectively. The following theorem is proved in \cite{Deb_real}.

\begin{tw}
For any spectral triple with a real structure $(\Alg, \Hil, \Dirac, \wt{J})$ the category ${\bf Q_{{\rm real} }} ( \Alg, \Hil,
\Dirac, \wt{J} ) $ of quantum families of orientation and real structure preserving isometries has a universal (initial) object,
to be denoted by $(\wt{{\QISO}^+_{{\rm real}}} ( \Alg, \Hil, \Dirac, \wt{J} ), U_0 )  $ (or, when the context is clear, by
$\wt{{\QISO}^+_{{\rm real}}} ( \Dirac )$.)
 The $C^*$-algebra $\wt{{\QISO}^+_{{\rm real}}} (\Dirac) $ has a coproduct $ \Delta $ such that $ (\wt{{\QISO}^+_{{\rm real}}} ( \Dirac), \Delta) $ is a compact quantum group and $ ( \wt{{\QISO}^+_{{\rm real}}} ( \Dirac ), U_0 ) $ is a universal object in the category $ {\bf Q^{\prime}_{\rm real} } ( \Alg, \Hil, \Dirac, \wt{J} ) .$  The corepresentation $ U_0 $ is faithful.
 \end{tw}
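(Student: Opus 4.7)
The plan is to realize $\wt{{\QISO}^+_{\rm real}}(\Dirac)$ as a quotient of the universal object $\qlg := \wt{{\QISO}^+}(\Alg,\Hil,\Dirac)$ supplied by Theorem \ref{existence}. Let $\wt{U}\in M(K(\Hil)\ot \qlg)$ denote its canonical unitary corepresentation. Since $\wt{J}$ commutes with $\Dirac$ on $\Dom(\Dirac)$ and preserves $\Dom(\Dirac)$, it maps each (finite-dimensional) eigenspace of $\Dirac$ into itself. Fix an orthonormal basis $(\xi_i)$ of $\Hil$ consisting of eigenvectors of $\Dirac$, and write $\wt{J}\xi_i = \sum_k c_{ki}\xi_k$ (a finite sum) together with $\wt{U}\xi_i = \sum_j \xi_j\ot u_{ji}$, where $u_{ji}\in \qlg$. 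Unfolding the condition \eqref{realstrcomm} on this basis, the antilinearity of both $\wt{J}$ and $\wt{J}_\qlg$ turns it into a family of linear relations
\[ \sum_j c_{kj}\, u_{ji}^* \;=\; \sum_j c_{ji}\, u_{kj}, \]
one for each pair $(k,i)$ with $\xi_i$ and $\xi_k$ in a common eigenspace of $\Dirac$. Let $I \subseteq \qlg$ be the closed two-sided $*$-ideal generated by all the elements $\sum_j c_{kj} u_{ji}^* - \sum_j c_{ji} u_{kj}$, and put $\wt{{\QISO}^+_{\rm real}}(\Dirac) := \qlg / I$, with $U_0$ the corepresentation on $\Hil$ induced from $\wt{U}$ via the quotient map.

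For the universal property in ${\bf Q_{\rm real}}(\Alg,\Hil,\Dirac,\wt{J})$, take any object $(\mathcal{S},U)$. Theorem \ref{existence} provides a unique morphism $\phi : \qlg \to \mathcal{S}$ such that $(\id_{K(\Hil)}\ot \phi)(\wt{U}) = U$. Applying $\phi$ to the generators of $I$ produces precisely the relations that the matrix coefficients of $U$ must satisfy as a consequence of $(\mathcal{S},U)$ preserving the real structure; each such relation vanishes, so $\phi(I)=0$ and $\phi$ descends uniquely to a morphism $\wt{{\QISO}^+_{\rm real}}(\Dirac)\to \mathcal{S}$ intertwining $U_0$ and $U$. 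In particular $(\wt{{\QISO}^+_{\rm real}}(\Dirac), U_0)$ is itself an object of ${\bf Q_{\rm real}}$.

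To equip $\wt{{\QISO}^+_{\rm real}}(\Dirac)$ with a coproduct I use the standard bootstrap argument. Consider the pair $(\wt{{\QISO}^+_{\rm real}}(\Dirac)\ot \wt{{\QISO}^+_{\rm real}}(\Dirac), V)$ where $V := (U_0)_{12}(U_0)_{13}$, with the leg-numbering notation acting on $K(\Hil)\ot \wt{{\QISO}^+_{\rm real}}(\Dirac)\ot \wt{{\QISO}^+_{\rm real}}(\Dirac)$. A direct verification (using that the canonical antilinear map on $\mathcal{B}\ot\mathcal{B}$ factors as the tensor product of the canonical maps on each leg) shows this pair lies in ${\bf Q_{\rm real}}$; the universal property just established yields a unique morphism $\Delta : \wt{{\QISO}^+_{\rm real}}(\Dirac)\to \wt{{\QISO}^+_{\rm real}}(\Dirac)\ot \wt{{\QISO}^+_{\rm real}}(\Dirac)$ with $(\id\ot \Delta)(U_0) = V$. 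Coassociativity together with the density conditions needed to make $\Delta$ a compact quantum group coproduct descend from the analogous statements for the coproduct on $\qlg$ through the canonical surjection, exactly as in the proof of Theorem 2.23 of \cite{JyotDeb2}. Faithfulness of $U_0$ is automatic since by construction $\wt{{\QISO}^+_{\rm real}}(\Dirac)$ is generated as a $C^*$-algebra by the matrix coefficients of $U_0$. Universality in ${\bf Q'_{\rm real}}$ then follows from universality in ${\bf Q_{\rm real}}$ just as in Theorem \ref{existence}.

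The main technical obstacle is the descent of the coproduct, equivalently the assertion that $I$ is a Hopf ideal. The bootstrap through $V = (U_0)_{12}(U_0)_{13}$ circumvents a more combinatorially awkward direct verification that $\Delta(I)$ lies in the kernel of the natural surjection $\qlg\ot\qlg\twoheadrightarrow \wt{{\QISO}^+_{\rm real}}(\Dirac)\ot \wt{{\QISO}^+_{\rm real}}(\Dirac)$, which would require careful bookkeeping of antilinearities because the relations defining $I$ couple the $u_{ji}$ with their adjoints.
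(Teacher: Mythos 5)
First, a point of reference: the paper does not prove this theorem at all --- it is quoted verbatim from \cite{Deb_real} with the sentence ``The following theorem is proved in \cite{Deb\_real}'' --- so there is no in-paper argument to compare yours with; I can only measure your proposal against the statement itself and against the strategy of the cited proof. Measured that way, your proof has a genuine gap at its very first step. You build $\wt{{\QISO}^+_{\rm real}}(\Dirac)$ as a quotient of $\qlg=\wt{{\QISO}^+}(\Alg,\Hil,\Dirac)$ ``supplied by Theorem \ref{existence}'', but Theorem \ref{existence} carries hypotheses that the present statement does not: it requires $\Dirac$ to have a one-dimensional kernel spanned by a vector $\xi$ that is cyclic and separating for $\Alg$, with every eigenvector of $\Dirac$ lying in $\Alg\xi$. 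The theorem you are asked to prove is asserted ``for any spectral triple with a real structure''; without those extra hypotheses the object $\qlg$ may simply fail to exist, and then there is nothing to quotient. Indeed the whole point of the result in \cite{Deb_real} is that the real-structure condition rescues existence unconditionally (the paper itself hints at this when it later says $\QISO^+_{\rm real}(\Dirac)$ is a quantum subgroup of $\QISO^+(\Dirac)$ ``whenever the latter exists''). The proof in \cite{Deb_real} therefore starts not from $\wt{{\QISO}^+}$ but from the universal object of the larger category of quantum families of unitaries commuting with $\Dirac$ --- which exists for every spectral triple of compact type as a free product of Wang's universal unitary quantum groups over the finite-dimensional eigenspaces of $\Dirac$ --- imposes the $\wt{J}$-relations there, and then uses the real structure to handle condition (ii) of Definition \ref{family}. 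That last step is exactly the work your route avoids doing and cannot do in general.

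That said, the mechanics of your argument are sound and, restricted to the situations where Theorem \ref{existence} applies (which covers every example in this paper, since $\delta_e$ is cyclic and separating and spans $\ker\Dirac_\Gamma$), your construction is essentially what the authors use in practice: your coordinate relations $\sum_j c_{kj}u_{ji}^*=\sum_j c_{ji}u_{kj}$ are correct and are precisely what Lemma \ref{groupalgebra_S3_realstructure_lemma} extracts for $J\delta_\gamma=\delta_{\gamma^{-1}}$, and the paper explicitly remarks that $\QISO^+_{\rm real}(\Dirac)$ ``is the quotient of $\QISO^+(\Dirac)$ by the commutation relations arising from the condition \eqref{realstrcomm}''. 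Two smaller points to tidy if you keep this approach in the restricted setting: since $\wt{J}$ is allowed to be unbounded, you should add a core argument to pass from the relations on eigenvectors back to the identity $(\wt{J}\ot\wt{J}_{\mathcal S})U=U\wt{J}$ on all of $\Dom(\Dirac)$; and in the bootstrap for the coproduct you should verify conditions (i) and (ii) of Definition \ref{family} for $V=(U_0)_{12}(U_0)_{13}$ against arbitrary (not merely product) states of the tensor square, exactly as in the proof of Theorem 2.23 of \cite{JyotDeb2}.
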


The need to consider `nondegenerate' actions motivates another definition (we keep the notation of the last theorem):

\begin{deft}
The Woronowicz $C^*$-subalgebra of $\wt{{\QISO}^+_{{\rm real}}} ( \Dirac ) $ generated by elements of the form $ \left\langle
\alpha_{U_0} ( a ) ( \eta \ot 1 ), \eta^{\prime} \ot 1 \right\rangle_{\wt{{\QISO}^+_{{\rm real}}}(\Dirac)} $ where $ \eta,
\eta^{\prime} \in \Hil$,   $a \in  \Alg $ and  $ \left\langle \cdot  , \cdot \right\rangle_{\wt{{\QISO}^+_{{\rm real}}}(\Dirac)}
$ denotes the $ \wt{{\QISO}^+_{{\rm real}}}(\Dirac) $ valued inner product of the Hilbert module $ \Hil \ot \wt{{\QISO}^+_{{\rm
real}}}(\Dirac)$, will be called the quantum group of orientation and real structure preserving isometries of the given spectral
triple with a real structure. We will denote it ${\QISO}^+_{{\rm real}}(\Alg, \Hil, \Dirac, \wt{J})$, or, when the context is
clear, by $ {\QISO}^+_{{\rm real}}(\Dirac).$
\end{deft}

 It follows from the above definition that $ {\QISO}^+_{{\rm real}}( \Dirac ) $ is a quantum subgroup of ${\QISO}^+( \Dirac ) $ whenever the latter exists, since the former is the universal object in a subcategory of the category
for which the latter is universal (if it exists). It is easily seen that $ {\QISO}^+_{{\rm real}}( \Dirac ) $ is the quotient of
${\QISO}^+( \Dirac )$ by the commutation relations arising from the condition \eqref{realstrcomm}.

 As discussed above, where  $(\Alg,\Hil,\Dirac) = (\bc[\Gamma], l^2(\Gamma), \Dirac_{\Gamma})$, the real structure is determined
by the antiunitary $J$ defined by the formula \eqref{realstr}. The computation of the $\QISO^+_{\tu{real}}(\bc[\Gamma],
l^2(\Gamma), \Dirac_{\Gamma})$, denoted further by $\QISO^+_{\tu{real}}(\Gamma,S)$ is facilitated by the following lemma:

\blmma \label{groupalgebra_S3_realstructure_lemma} Let the (orientation preserving, isometric) action of a compact quantum group
$({\mathcal S}, \Delta)$ on $(\bc[\Gamma], l^2(\Gamma), \Dirac_{\Gamma})$ be given by a unitary $U$ satisfying
\eqref{unitaction1}-\eqref{unitaction2} (with $\wt{\QISO^+}(\Gamma, S)$ replaced by ${\mathcal S}$), where the coefficients
$[q_{\gamma, \gamma'}]_{\gamma, \gamma' \in \Gamma }$ in ${\mathcal S}$ are determined the formula \eqref{alphag} for the adjoint
action $\alpha_U:=\tu{Ad} U$ of ${\mathcal S}$ on $C^*_r(\Gamma)$ and $ q $ is as in \eqref{U_0_xi}. Then $U \wt{J} = ( J \ot
\wt{J}_{{\mathcal S}} ) U$ if and only if $q$ is selfadjoint and
\begin{equation}   q_{\gamma', \gamma} q  = q q_{\gamma', \gamma}   \label{inverses}\end{equation}
for all $ \gamma, \gamma'$ in $ \Gamma $ such that $ l ( \gamma ) = l ( \gamma'). $
\elmma

\begin{proof}
Observe first that $U \wt{J}(\delta_e )  = ( J \ot \wt{J_{\mathcal S}} ) U ( \delta_e ) $ implies that $ \delta_e \ot q = \delta_e \ot q^* $, so that $ q $ is self-adjoint. Let then $\gamma \in \Gamma$ and compute:
\begin{align*} U \wt{J} ( \delta_\gamma ) &=  U (  \delta_{\gamma^{- 1}} ) =  \widetilde{U} ( \delta_{\gamma^{- 1}} \otimes 1 )           =  \widetilde{U} ( \lambda_{\gamma^{- 1}} \otimes 1 ) {\widetilde{U}}^* \widetilde{U} ( \delta_e \otimes 1 )
\\& =  \alpha ( \lambda_{\gamma^{- 1}} ) \widetilde{U} ( \delta_e \otimes 1 )
 = \alpha ( \lambda_{\gamma^{- 1}} ) ( \delta_e \otimes q )\\&= \left( \sum_{\gamma^{\prime}: ~ l ( \gamma^{\prime} ) = l ( \gamma ) }  \lambda_{\gamma^{\prime}} \otimes q_{\gamma^{\prime}, \gamma^{- 1}} ~ \right) ( \delta_e \otimes q )
 = \sum_{\gamma^{\prime}: ~ l ( \gamma^{\prime} ) = l ( \gamma ) } \delta_{\gamma^{\prime}} \otimes q_{\gamma^{\prime}, \gamma^{- 1}} q .\end{align*}
On the other hand
\begin{align*}  ( \wt{J} \otimes \wt{J_{{\mathcal S}}}  )\, & U ( \delta_\gamma ) =
   ( \wt{J} \otimes \wt{J_{{\mathcal S}}} ) \alpha ( \lambda_\gamma ) ( \delta_e \otimes q )
  \\&=  ( J \otimes \wt{J_{{\mathcal S}}} ) \left( \sum_{\gamma^{\prime}: ~ l ( \gamma^{\prime} )= l ( \gamma ) } \lambda_{\gamma^{\prime}} \otimes q_{\gamma^{\prime},\gamma} ~ \right) ( \delta_e \otimes q )
  \\&= ( J \otimes \wt{J_{{\mathcal S}}} ) \left( \sum_{\gamma^{\prime}: ~ l ( \gamma^{\prime} ) = l ( \gamma ) } \delta_{\gamma^{\prime}} \otimes q_{\gamma^{\prime},\gamma} q ~ \right)
  = \sum_{\gamma^{\prime}: ~ l ( \gamma^{\prime} ) = l ( \gamma ) } \delta_{{\gamma^{\prime}}^{- 1}} \otimes {( q_{\gamma^{\prime},\gamma} q )}^*. \end{align*}

Thus, $U \wt{J} = ( J \ot \wt{J_{{\mathcal S}}} ) U$ can hold if and only if $q$ is self-adjoint and  for all $ \gamma,
\gamma^{\prime} $ in $ \Gamma $ such that $ l ( \gamma ) = l ( \gamma^{\prime} ), ~  q_{{\gamma^{\prime}}^{- 1},\gamma} q = (
q_{\gamma^{\prime}, \gamma^{- 1}} q )^*.$ As $ q $ is a self-adjoint unitary, we have  $ q_{{\gamma^{\prime}}^{- 1}, \gamma} q =
q {q_{\gamma^{\prime}, \gamma^{- 1}}}^*  $ for all such $ \gamma, \gamma^{\prime}. $
 But by \eqref{alpha_star_hom_cond}, this is equivalent to the condition that
 $ q_{{\gamma^{\prime}}^{- 1}, \gamma} q  = q q_{{\gamma^{\prime}}^{- 1}, \gamma} $ for all $ \gamma, \gamma^{\prime} $ in $ \Gamma $
 such that $ l ( \gamma ) = l ( \gamma^{\prime} ) $. As we can replace $\gamma'$ by $\gamma'^{-1}$, the lemma follows.
\end{proof}

The above lemma leads to a simple description of the quantum groups  of orientation and real structure preserving isometries in
terms of the quantum groups of orientation preserving isometries. It is formulated in the next theorem.

\begin{tw} \label{realisom}
Let $\Gamma$ be a finitely generated discrete group with a symmetric generating set $S$. Assume that the associated spectral
triple with a real structure $(\bc[\Gamma], l^2(\Gamma), \Dirac_{\Gamma}, J)$ satisfies the first order condition modulo
compacts. Then ${\QISO}^+_{{\rm real}} (\Gamma, S) \cong {\QISO}^+ (\Gamma, S) $ and  $ \wt{{\QISO}^+_{{\rm real}}} ( \Gamma, S)
\cong {\QISO}^+ (\Gamma, S) \ot \bc^2$ (as a $C^*$-algebra).
\end{tw}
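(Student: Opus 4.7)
The plan is to exploit the free product decomposition $\wt{\QISO^+}(\Gamma,S) \cong \QISO^+(\Gamma,S) \star C(\bt)$ from Theorem \ref{existence} together with the explicit translation of the real structure preserving condition supplied by Lemma \ref{groupalgebra_S3_realstructure_lemma}. Since any real structure preserving corepresentation is in particular orientation preserving, $\wt{\QISO^+_{\tu{real}}}(\Gamma,S)$ is an object of $\mathbf{Q^{\prime}}(\bc[\Gamma],l^{2}(\Gamma),\Dirac_{\Gamma})$, and the universal property of $\wt{\QISO^+}(\Gamma,S)$ in that category yields a surjective quantum group morphism $\pi\colon \wt{\QISO^+}(\Gamma,S) \twoheadrightarrow \wt{\QISO^+_{\tu{real}}}(\Gamma,S)$. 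A standard two-way application of the universal properties shows that $\ker\pi$ is generated exactly by the relations forcing the corepresentation to be real structure preserving, which by Lemma \ref{groupalgebra_S3_realstructure_lemma} read $q = q^{*}$ and $q\,q_{\gamma',\gamma} = q_{\gamma',\gamma}\,q$ for all $\gamma,\gamma'\in\Gamma$ with $l(\gamma)=l(\gamma')$.

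Once $q = q^{*}$ is imposed the unitary $q$ becomes a self-adjoint involution, so $C^{*}(q) \cong C^{*}(\bz_{2}) \cong \bc^{2}$. The second family of relations says that this copy of $\bc^{2}$ commutes with every generator $q_{\gamma',\gamma}$, hence with all of $\QISO^+(\Gamma,S)$. Imposing centrality of one factor of a free product of unital $C^{*}$-algebras turns the free product into the corresponding spatial tensor product, by a direct comparison of the universal properties of both constructions, so
\[
\wt{\QISO^+_{\tu{real}}}(\Gamma,S) \;\cong\; \QISO^+(\Gamma,S) \otimes \bc^{2},
\]
which is the second claim. For the first claim, recall that $\QISO^+_{\tu{real}}(\Gamma,S)$ is the Woronowicz $C^{*}$-subalgebra of $\wt{\QISO^+_{\tu{real}}}(\Gamma,S)$ generated by matrix coefficients of the adjoint action $\alpha_{U_{0}} = \tu{Ad}\,\wt{U}$ on $\bc[\Gamma]$. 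As already observed in the discussion following Theorem \ref{D_hat}, conjugation by $\wt{U}$ cancels the two $q$-factors appearing in \eqref{unitaction1}--\eqref{unitaction2}, leaving
\[
\alpha_{U_{0}}(\lambda_{\gamma}) \;=\; \sum_{\gamma':\,l(\gamma')=l(\gamma)} \lambda_{\gamma'} \otimes q_{\gamma',\gamma}.
\]
Thus the generated subalgebra coincides with $\QISO^+(\Gamma,S) \otimes 1 \cong \QISO^+(\Gamma,S)$ inside $\QISO^+(\Gamma,S) \otimes \bc^{2}$.

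The main obstacle is to make the kernel identification in the first step fully rigorous: one must verify that the quotient of $\wt{\QISO^+}(\Gamma,S)$ by the two families of relations from Lemma \ref{groupalgebra_S3_realstructure_lemma} carries a well-defined coproduct, unitary corepresentation, and intertwining data making it into an object of $\mathbf{Q^{\prime}_{\tu{real}}}(\bc[\Gamma],l^{2}(\Gamma),\Dirac_{\Gamma},J)$, so that the inverse morphism is forced by the universal property of $\wt{\QISO^+_{\tu{real}}}(\Gamma,S)$. This is the point at which the standing hypothesis of the first order condition modulo compacts plays its role, placing the spectral triple inside the framework of \cite{Deb_real} in which the real structure preserving universal object behaves as expected.
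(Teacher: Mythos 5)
Your argument is correct and follows essentially the same route as the paper: both proofs reduce everything to Lemma \ref{groupalgebra_S3_realstructure_lemma} and identify $\wt{{\QISO}^+_{\rm real}}(\Gamma,S)$ with the quotient of ${\QISO}^+(\Gamma,S)\star C(\bt)$ by the relations $q=q^*$ and $q\,q_{\gamma',\gamma}=q_{\gamma',\gamma}\,q$ --- the paper merely presents this bottom-up, exhibiting $\mathcal{S}_1:={\QISO}^+(\Gamma,S)\ot\bc^2$ directly as an object of $\mathbf{Q^{\prime}_{\rm real}}$ and running the same two-way universality comparison you describe, and obtains the first claim by noting that the lemma's conditions hold trivially for $q=1$. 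The only slight misdirection is your closing paragraph: the paper resolves your ``main obstacle'' not by appealing to the first order condition modulo compacts (which its proof never actually invokes) but simply by checking via the lemma that the natural unitary on $\mathcal{S}_1$ is real structure preserving, so that both surjections exist.
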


\begin{proof}
The first statement follows from the fact that the conditions in Lemma \ref{groupalgebra_S3_realstructure_lemma} are trivially satisfied for $q=1$.

Let $\mathcal{S}_1$ denote the $C^*$-algebra generated by ${\QISO}^+ (\Gamma, S)$ and a self-adjoint unitary $q$ commuting with
${\QISO}^+ (\Gamma, S)$. As a selfadjoint unitary is the difference of two complementary projections, it is easy to see that
$\mathcal{S}_1 \cong {\QISO}^+ (\Gamma, S) \ot \bc^2$. It follows from Lemma \ref{groupalgebra_S3_realstructure_lemma} that $
{\mathcal S_1} $ is a subobject of $\wt{{\QISO}^+}_{{\rm real}} (\Gamma,S) $ in the category $ {\bf Q^{\prime}_{\rm real} }
(\bc[\Gamma], l^2(\Gamma), \Dirac_{\Gamma}, J )$ (note that $\QISO^+_{{\rm real}} (\Gamma,S)$ is generated by the elements
$q_{s,t}$ where $s,t \in S$).  Conversely, as the relation \eqref{inverses} has to be satisfied by the appropriate elements of
$\wt{{\QISO}^+}_{{\rm real}} ( \Gamma,S) $,  $\wt{{\QISO}^+}_{{\rm real}} ( \Gamma, S ), $  is a subobject of $ {\mathcal S_1} $
in the same category. The theorem follows by the universality of  $\wt{{\QISO}^+}_{{\rm real}} ( \Gamma,S). $
\end{proof}

As follows from Lemma \ref{modcomp} and the discussion before it, in all the examples considered so far the assumptions of the
above theorem are satisfied. Thus the corresponding quantum groups  of orientation and real structure preserving isometries can
be read out directly from Theorem \ref{realisom} and computations in Sections 4-6.

\section{Construction of the Laplacian}

From the point of view of quantum isometry groups (and also for general noncommutative geometry interest) it is important to know
whether the  spectral triples $(\bc[\Gamma], l^2(\Gamma), \Dirac_{\Gamma})$  on group $C^*$-algebras are `admissible' in the
sense of \cite{Deb}; in other words whether we have a `good' Laplacian operator.

Following \cite{Deb} we describe below briefly how one constructs the Laplacian given a $\theta$-summable spectral triple $(\Alg,
\Hil, \Dirac).$ The idea is based on first completing $\Alg$ with respect to the scalar products resulting from a state
$\omega_\Dirac$ given by the formula
\[ \omega_\Dirac (a)= \textup{LIM}_{t\to 0^+} \frac{\Tr (ae^{-t\Dirac^2})}{\Tr(e^{-t\Dirac^2})}, \;\;\; a \in \Alg,\]
where $\LIM$ denotes some generalised limit (see \cite{Deb}, \cite{book}). This leads to a Hilbert space $\Hil_0$ (playing the
role of the classical $L^2$-space with respect to the Riemannian volume form --  recall that $\Hil$ corresponds to the Hilbert
space of spinors!). Observe that as $\exp(-t\Dirac^2)$ is a scalar multiple of a faithful density matrix, the natural map from
$\Alg$ to $\Hil_0$ is injective. Analogous completion, this time with respect to the scalar product involving the commutators of
elements in $\Alg$ with the Dirac operator, leads to $\Hil_1$, the Hilbert space playing the role of the Hilbert space of
$1$-forms on a classical Riemannian manifold. The operation of taking the commutator with $\Dirac$ provides a natural linear
densely defined operator from $\Hil_0$ to $\Hil_1$ denoted by $d$. If $d$ is closable, denote its closure by $\overline{d}$. The
Laplacian is then  a selfadjoint operator on $\Hil_0$ given by
\[ \Lap = \overline{d}^* \overline{d}.\]
We call the spectral triple $(\Alg, \Hil, \Dirac)$ admissible if $d$ is closable, the domain of $ \Lap $ contains $\Alg$ (viewed
as a subspace of $\Hil_0$), $\Alg$ is left invariant by $\Lap$, $\Lap$ has compact resolvent, the kernel of $ \Lap $ is
one-dimensional and the span of its eigenvectors is a norm dense subspace of $\Alg.$  Note that to establish compactness of the
resolvent it suffices to check that the eigenspaces of $\Lap$ are finite-dimensional and eigenvalues increase to infinity (if
$\Alg$ is infinite dimensional). If $(\Alg, \Hil, \Dirac)$ is admissible, we can also view $ \Lap $ as a linear operator from
$\Alg$ to $\Alg$. If a spectral triple is admissible, D.\,Goswami showed in \cite{Deb} that one can associate to it a canonical
quantum isometry group, denoted ${\QISO}^{{\mathcal L}} (\alg )$. We refer to the article \cite{Deb} for the precise definitions;
naively one can think of ${\QISO}^{{\mathcal L}} (\alg )$ as the universal compact quantum group acting on $\alg$ in the manner
preserving the eigenspaces of the Laplacian $\mathcal{L}$.

We will now describe a method of investigating the existence and admissibility of the Laplacian of a given triple without
describing explicitly Hilbert spaces $\Hil_0$ and $\Hil_1$. It is based on the following observation: suppose $(\Alg, \Hil,
\Dirac)$ is admissible so that $ \Lap $ exists, and consider the associated operator on $\Alg$ (denoted by the same letter). Then
for all $a, b \in \Alg$ we must have
\begin{equation} \label{Lapscal} \langle \Lap(a), b \rangle_{\Hil_0} = \langle [\Dirac,a], [\Dirac,b]\rangle_{\Hil_1},\end{equation}
moreover this condition determines $\Lap$ uniquely. This observation leads to the determination of a potential form of the
Laplacian; once this is known one can attempt to show that it has all the properties required for admissibility.

We are ready to apply the above discussion for spectral triples of the type $(\bc[\Gamma], l^2(\Gamma), \Dirac_{\Gamma})$, where
$\Gamma$ is a finitely generated discrete group with a generating set $S$. As the formula \eqref{Lapscal} is clearly linear in
$b$ and conjugate linear in $a$ it is enough to check what it says for $a=\la_{\gamma}$, $b= \la_{\gamma'}$ for some $\gamma,
\gamma' \in \Gamma$. Suppose that $\Lap(\la_{\gamma}) = \sum_{\gamma'' \in \Gamma} c_{\gamma, \gamma''} \la_{\gamma''}$, with
only finitely many complex coefficients $c_{\gamma, \gamma''}$ being non-zero. Then the left hand side of \eqref{Lapscal} reads:
\begin{align*}
\LIM_{t \to 0^+} \sum_{\kappa \in \Gamma} \langle \Lap(\la_{\gamma}) \delta_{\kappa}, \la_{\gamma'} e^{-t l(\kappa)^2}
\delta_{\kappa}\rangle  &= \LIM_{t \to 0^+} \sum_{\kappa, \gamma'' \in \Gamma} \langle c_{\gamma, \gamma''} \delta_{\gamma''
\kappa}, e^{- t l(\kappa)^2}  \delta_{\gamma'\kappa} \rangle
\\& = \LIM_{t \to 0^+}  c_{\gamma, \gamma'} \sum_{\kappa \in \Gamma} e^{-t l(\kappa)^2}
\end{align*}
and the right hand side reads
\begin{align*}
\LIM_{t \to 0^+} \sum_{\kappa \in \Gamma}  \langle [\Dirac,\la_{\gamma}] &\delta_{\kappa}, [\Dirac, \la_{\gamma'}] e^{-t
l(\kappa)^2} \delta_{\kappa}\rangle  \\ &= \LIM_{t \to 0^+} \sum_{\kappa \in \Gamma} \langle (l(\gamma\kappa) - l(\kappa))
\delta_{\gamma\kappa}, (l(\gamma'\kappa) - l(\kappa)) e^{-tl(\kappa)^2} \delta_{\gamma'\kappa}\rangle
\\&= \LIM_{t \to 0^+} \delta_{\gamma}^{\gamma'} \; \sum_{\kappa \in \Gamma} |l(\gamma\kappa) - l(\kappa)|^2 e^{-tl(\kappa)^2}.
\end{align*}
This shows that if $\Lap$ associated with $(\bc[\Gamma], l^2(\Gamma), \Dirac_{\Gamma})$ exists and is admissible, then it is
automatically `diagonal', i.e.\ for each $\gamma \in \Gamma$ there exists $c_{\gamma} \in \bc$ such that
\begin{equation}\label{Lapform}\Lap (\la_{\gamma}) = c_{\gamma} \la_{\gamma};\end{equation}
 moreover
\begin{equation} \label{cfor} c_{\gamma} = \LIM_{t \to 0^+} \frac{\sum_{\kappa \in \Gamma} |l(\gamma\kappa) - l(\kappa)|^2 e^{-tl(\kappa)^2}}
{\sum_{\kappa \in \Gamma}  e^{-tl(\kappa)^2}}.\end{equation} Suppose first that $\Gamma$ is infinite. For a fixed $\gamma\in
\Gamma$ and each $t>0, \kappa \in \Gamma$ we have
\[ 0 \leq |l(\gamma\kappa) - l(\kappa)|^2 e^{-tl(\kappa)^2} \leq l(\gamma)^2 e^{-tl(\kappa)^2},\]
so that the value of the expression after the limit in \eqref{cfor} is always between $0$ and $l(\gamma)^2$; in particular
$c_e=0$. Properties of generalised limit imply that $c_{\gamma}$ is always defined and $c_{\gamma} \in [0, l(\gamma)^2]$. Thus
(first fixing some generalised limit $\LIM$) we can rigorously deduce that the operator $\Lap$ defined by the formula
\eqref{Lapform} and interpreted as a densely defined operator on $\Hil_0$ is indeed (due to formula \eqref{Lapscal}) an extension
of the densely defined operator $d^*d$. The admissibility of $\Lap$ can be then checked directly by inspecting the properties of
eigenspaces and eigenvalues of $\Lap$. More precisely, observe that if we could show that the (standard) limit in \eqref{cfor}
exists, $c_{\gamma}=c_{\gamma'}$ if and only if $l(\gamma) = l(\gamma')$ and moreover $c_{\gamma}$ converges to infinity as
$l(\gamma)$ tends to infinity, then it would follow that $\Lap$ is admissible. To obtain the above statements one needs to
analyse for a fixed $\gamma \in \Gamma$ the behaviour of the expression
\begin{equation} \label{cfort} c_{t,\gamma} = \frac{\sum_{\kappa \in \Gamma} |l(\gamma\kappa) - l(\kappa)|^2 e^{-tl(\kappa)^2}}
{\sum_{\kappa \in \Gamma}  e^{-tl(\kappa)^2}},\end{equation}
 as $t\to 0^+$. Note that we can rewrite the expression above as
\begin{equation} \label{cfort2} c_{t,\gamma} = \frac{\sum_{n=0}^{\infty} e^{-tn^2} (\sum_{\kappa \in W_n}  |l(\gamma\kappa) - l(\kappa)|^2) }
{\sum_{n=0}^{\infty}  e^{-tn^2} \card{W_n}} .\end{equation} Thus in concrete examples we will need to understand the ratio
\begin{equation} \label{ratio}r_{n, \gamma}:=\sum_{\kappa \in W_n}  |l(\gamma\kappa) - l(\kappa)|^2 (\card{W_n})^{-1}.\end{equation}

Note finally that if $\Gamma$ is finite, then the (standard) limit in \eqref{cfor} exists and we have
\begin{equation} \label{Lapfin}c_{\gamma}
= \frac{\sum_{\kappa \in \Gamma} |l(\gamma\kappa) - l(\kappa)|^2 } {\tu{card} \,\Gamma}.\end{equation}
 The admissibility of
$\Lap$ does not form a problem in that case, as all Hilbert spaces considered are finite-dimensional. The Laplacian also
satisfies the connectedness condition (one-dimensionality of the kernel), as $c_{\gamma}= 0 $ if and only if $\gamma=e$. For
concrete computations one may still need to understand the dependence between the value of $c_{\gamma}$ and the length of
$\gamma$.

\subsection*{ Computation of  ${\QISO}^{{\mathcal L}} ( C^* ( S_3 ) )$}
A direct computation of the expressions in \eqref{Lapfin} for $\Gamma=S_3$ and both sets of generators considered in Section
\ref{S3Section} shows that in each case $c_{\gamma}=c_{\gamma'}$ if and only if $l(\gamma) = l(\gamma')$ ($\gamma, \gamma' \in
S_3$). Thus, with respect to both $ S $ and  $ S^{\prime}, ~ {\QISO}^{{\mathcal L}} ( C^* ( S_3 ) ) $ coincides with $ {\QISO}^+ ( C^* ( S_3 ) )$ computed in Section \ref{S3Section}.

\subsection*{ Computation of ~ ${\QISO}^{{\mathcal L}} ( C^*_r ( \freeg ) )$}

We need to investigate for a fixed $\gamma \in \freeg$ the behaviour of the ratio $r_{n, \gamma}$ defined in \eqref{ratio}. We fix
 $\gamma \in \freeg$ and let $m = l(\gamma)$. We first note that $\sum_{\kappa \in W_n} |l(\gamma\kappa) - l(\kappa)|^2$ depends
only on the length of $\gamma$ (and not on its actual form). Moreover for $3$ out of $4$ words in $W_n$ (precisely speaking for
all these $\kappa\in W_n$ such that $\kappa_1 \neq (\gamma_{m})^{-1}),$ $l(\gamma \kappa) = l(\gamma) + l(\kappa)$. Hence
we have $r_{n, \gamma} \in [\frac{3}{4}l(\gamma)^2,l(\gamma)^2]$. Moreover if $n, n' \geq m$ it is easy to observe that $r_{n,
\gamma} = r_{n', \gamma}:= R_{l(\gamma)}$ (only first $m$ letters of a word $\kappa \in W_n$ influence possible cancellations in
$\gamma \kappa$). This implies that (recall \eqref{cfort2})
\[ c_{t, \gamma} = \frac{\sum_{n=0}^{\infty} e^{-tn^2} r_{n, \gamma} \card (W_n) }
{\sum_{n=0}^{\infty}  e^{-tn^2} \card(W_n)} = R_{l(\gamma)} + \frac{\sum_{n=0}^{m-1} e^{-tn^2} (r_{n, \gamma} -
R_{l(\gamma)})}{\sum_{n=0}^{\infty}  e^{-tn^2} \card(W_n)}.\] Therefore,
\[ c_{\gamma} = \lim_{t \to 0^+} c_{t, \gamma} = R_{l(\gamma)} \in \left[\frac{3}{4}l(\gamma)^2,l(\gamma)^2\right].\]
This suffices to observe that if $m\neq m'$ then $R_m \neq R_m'$ and $R_m\nearrow \infty$ as $m$ tends to $\infty$, so the spectral triple is
admissible.

Note that the actual values of $R_{m}$ can be computed combinatorially by looking at the possibilities for cancellations. For
example $R_1=1$, $R_2 = \frac{13}{16} 2^2 $, $R_3 = \frac{200}{256} 3^2$, etc..

The discussion above remains valid for free group on arbitrarily many generators; the factor $\frac{3}{4}$ featuring in the
estimate above will be in general replaced by $\frac{2n-1}{2n}$, where $n$ is the number of generators. As in each case we have
$c_{\gamma}=c_{\gamma'}$ if and only if $l(\gamma) = l(\gamma')$ ($\gamma, \gamma' \in \mathbb{F}_n$), $ {\QISO}^{{\mathcal L}} (
C^*_r ( \freeg ) ) $ will coincide with $ {\QISO}^+_\Dirac ( C^*_r ( \freeg ) )   $ computed in Section \ref{freegroupqiso}.

\vspace*{0.2cm} \noindent \textbf{Acknowledgment.}
The work on this paper was started during the visit of the
 first named author to the Lancaster University in the summer 2009, which was made possible thanks to the support of the UKIERI
project Quantum Probability, Noncommutative Geometry and Quantum Information. We would like to thank Teo Banica for useful
comments on the first draft of the paper.

\end{document}